\numberwithin{equation}{section}
\newlist{cond}{enumerate}{2}
\setlist[cond,1]{label=(\arabic*),ref=\arabic*}
\setlist[cond,2]{label=(\arabic{condi}\alph*),ref=\arabic{condi}\alph*}
\crefname{condi}{condition}{conditions}
\Crefname{condi}{Condition}{Conditions}
\crefname{condii}{condition}{conditions}
\Crefname{condii}{Condition}{Conditions}
\crefname{scond}{condition}{conditions}
\Crefname{scond}{Condition}{Conditions}
\newcommand{\va}{\bm{a}}
\newcommand{\vx}{\bm{x}}
\newcommand{\vz}{\bm{z}}
\newcommand{\vt}{\bm{t}}
\newcommand{\vY}{\mathbf{Y}}
\newcommand{\calh}{\mathcal{H}}
\newcommand{\caln}{\mathcal{N}}
\newcommand{\calp}{\mathcal{P}}
\newcommand{\cals}{\mathcal{S}}
\newcommand{\calm}{\mathcal{M}}
\newcommand{\calf}{\mathcal{F}}
\newcommand{\calr}{\mathcal{R}}
\newcommand{\calc}{\mathcal{C}}
\DeclareMathOperator{\aut}{aut}
\DeclareMathOperator{\Cov}{Cov}
\DeclareMathOperator{\diag}{diag}
\newcommand{\NIV}{\mathrm{NIV}~\mkern-5mu}
\newcommand{\HS}{\mathrm{HS}}
\newcommand{\op}{\mathrm{op}}
\newcommand*\bigdot{{\mathpalette\bigdot@{.75}}}
\newcommand*\bigdot@[2]{\mathord{\vcenter{\hbox{\scalebox{#2}{$\m@th#1\mkern1mu\bullet\mkern1mu$}}}}}
\let\kone\bigdot
\title{Local limit theorem for joint subgraph counts}
\author[Ashwin Sah]{Ashwin Sah\nfts{1}}
\address{\nfts{1}Department of Mathematics, Massachusetts Institute of Technology, Cambridge, MA 02139, USA}
\email{asah@mit.edu}
\author[Mehtaab Sawhney]{Mehtaab Sawhney\nfts{2}}
\address{\nfts{2}Department of Mathematics, Columbia University, New York, NY 10027, USA}
\email{m.sawhney@columbia.edu}
\author[Daniel G. Zhu]{Daniel G. Zhu\nfts{3}}
\address{\nfts{3}Department of Mathematics, Princeton University, Princeton, NJ 08544, USA}
\email{zhd@princeton.edu}
\thanks{AS and MS were supported by NSF Graduate Research Fellowship Program DGE-2141064. A portion of this research was conducted during the period MS served as a Clay Research Fellow. DZ is supported by the NSF Graduate
Research Fellowship Program DGE-2039656.}
\begin{document}
\begin{abstract}
Extending a previous result of the first two authors, we prove a local limit theorem for the joint distribution of subgraph counts in the Erd\H{o}s-R\'{e}nyi random graph $G(n,p)$. This limit can be described as a nonlinear transformation of a multivariate normal distribution, where the components of the multivariate normal correspond to the graph factors of Janson. As an application, we show a number of results concerning the existence and enumeration of proportional graphs and related concepts, answering various questions of Janson and collaborators in the affirmative.
\end{abstract}
\maketitle
\section{Introduction}\label{sec:intro}
For a fixed graph $H$ and probability $p \in (0,1)$, the count $X_H$ of subgraphs isomorphic to $H$ in the Erd\H{o}s-R\'{e}nyi random graph $G(n,p)$ has been one of the most frequently studied random variables in the theory of random graphs. While a central limit theorem for $X_H$, establishing that it, properly normalized, converges to a normal distribution in the limit $n \to \infty$, was established by Nowicki in 1985 \cite{Now89} (see also \cite{NW88,Ruc88}), a series of recent works has focused on the local aspects of the distribution of $X_H$, with the aim of controlling the point probabilities $\setp[X_H = x]$ (see \cite{FKS21} and references therein). Chief among these results are a series of local limit theorems \cite{GK16,Ber16,Ber18,SS22}, culminating in the following local limit theorem of the first two authors for all connected subgraph counts:
\begin{thm}[{\cite[Theorem~1.1]{SS22}}]
Let $H$ be a connected graph with at least two vertices and let $p \in (\lambda, 1-\lambda)$. Then, for every $\eps > 0$ and $x \in \setz$, we have
\[\abs*{\sigma \setp[X_H = x] - \caln\paren*{\frac{x-\sete[X_H]}{\sigma}}} \lsim_{H,\lambda,\eps} n^{-1/2+\eps},\]
where $\sigma^2 = \Var[X_H]$ and $\caln(z) = (2\pi)^{-1/2} e^{-z^2/2}$ is the standard normal probability distribution function.
\end{thm}

The main objective of this paper is to develop a local limit theorem for joint distributions of connected subgraph counts, controlling probabilities of the form $\setp[X_H = x_H\;\forall H \in \calh]$ for certain sets of connected graphs $\calh$. Such a result would be applicable far beyond connected subgraph counts, as every graph statistic counting structures of bounded size,\footnote{Formally, such a graph statistic must be vertex-symmetric and satisfy the condition that, for every $n$, it can be written as a polynomial in the indicator functions of the edges of the graph with degree bounded independently of $n$.} including subgraph counts of disconnected graphs and induced subgraph counts, can be written as a function of $n$ and a finite number of connected subgraph counts (see \cref{sec:arith} of this paper for more details).

From a purely distributional standpoint, the behavior of $(X_H)_{H \in \calh}$ was settled (albeit somewhat unsatisfyingly) by Nowicki \cite{Now89}, who observed that for large $n$, almost all the variation in $X_H$ is controlled by the edge count of the graph, thus implying that in the $n\to\infty$ limit, any finite collection of subgraph counts converges in distribution to a collection of perfectly correlated normal random variables. In order to study graph statistics on $G(n,p)$ for which the above description is insufficiently precise, Janson \cite{Jan90,Jan94} developed the notion of \vocab{graph factors} $\gamma_H$, analogues of subgraph counts obtained through an orthogonalization process. Specifically, letting $x_e$ be the indicator variable of an edge $e$ and $\chi_e = (x_e - p)/\sqrt{p(1-p)}$ be a normalized version of $x_e$ satisfying $\sete[\chi_e] = 0$ and $\sete[\chi_e^2] = 1$, Janson defined
\[\gamma_H = \sum_{H' \cong H} \prod_{e \in E(H')} \chi_e,\]
where the sum is over all subgraphs $H'$ of $K_n$ isomorphic to $H$. It is straightforward to show that if $H$ and $H'$ have no isolated vertices, then $\sete[\gamma_H \gamma_{H'}]$ is $0$ if $H \ncong H'$ and otherwise equal to the number of subgraphs of $K_n$ isomorphic to $H \cong H'$. Moreover, we will show in \cref{sec:arith} of this paper that if we restrict ourselves to connected $H$, then the $\gamma_H$ are algebraically independent, and that the $X_H$ can be canonically be written as polynomials in the $\gamma_H$, and vice versa.

Janson proved that for any finite collection $\calh$ of (nonisomorphic) connected graphs with at least two vertices,\footnote{If $H$ is a single vertex, $\gamma_H$ is always $n$ and thus is uninteresting in this context.} the distribution of $(\gamma_H)_{H \in \calh}$, after appropriate scaling, converges to a standard multivariate normal distribution, with no correlations between different graph factors. Transforming back to subgraph counts yields a model of the joint distribution of subgraph counts consisting of a polynomial transform of a multivariate normal distribution.

The main result of this paper states that this model is asymptotically correct even at the level of point probabilities.
\begin{thm} \label{thm:main}
Let $p \in (\lambda, 1-\lambda)$ and let $\calh$ be a downwards closed set of nonisomorphic connected graphs with at least two vertices (in the sense of \cref{def:downwards-closed}). Then, for every $\eps > 0$ and permissible $(y_H)_{H \in \calh}$ (in the sense of \cref{def:permissible}) we have
\[\abs*{\setp[\gamma_{H} = y_H\;\forall H\in \calh] \prod_{H \in \calh} \paren*{(p(1-p))^{e(H)/2}\sigma_{H}} - \prod_{H \in \calh} \caln(y_H/\sigma_{H})} \lsim_{ \calh, \lambda, \eps} n^{-1/2+\eps},\]
where $\sigma_{H}^2 = \Var[\gamma_{H}]$ and $\caln(z) = (2\pi)^{-1/2} e^{-z^2/2}$ is the standard normal probability distribution function.
\end{thm}

Two terms in \cref{thm:main}, which will be defined formally in \cref{sec:arith}, merit discussion. First of all, the ``downwards closed'' condition is needed to ensure that there is a bijective mapping between $(\gamma_H)_{H \in \calh}$ and $(X_H)_{H \in \calh}$, and encompasses choices for $\calh$ such as the set of connected graphs $H$ with $2 \leq v(H) \leq a$ and $e(H) \leq b$, for arbitrary positive integers $a \geq 2$ and $b \geq 1$. Second, a tuple $(y_H)_{H \in \calh}$ is permissible if and only if it corresponds to values for $(X_H)_{H \in \calh}$ that are integers; obviously, if $(y_H)_{H \in \calh}$ is not permissible, we have $\setp[\gamma_{H} = y_H\;\forall H\in \calh] = 0$. The set of permissible $(y_H)_{H \in \calh}$ forms a ``skew lattice''\footnote{For the purposes of this introduction, a \emph{skew lattice} is, after possibly permuting coordinates, the image of $\setz^d$ under a ``upper triangular'' map of the form $(x_1,\ldots,x_d) \mapsto (a_1x_1+\psi_1,a_2x_2 + \psi_2(x_1),\ldots,a_dx_d+\psi_d(x_1,\ldots,x_{d-1}))$ for nonzero $a_1,\ldots,a_d$, constant $\psi_1$, and arbitrary functions $\psi_2,\ldots,\psi_d$. The density of such a skew lattice is $1/\abs{a_1a_2 \cdots a_d}$.} with density $(p(1-p))^{\sum_{H \in \calh} e(H)/2}$, explaining the appearance of that term in \cref{thm:main}.

Combined with appropriate tail bounds on the $\gamma_H$, \cref{thm:main} paves the way for a complete description of the local distribution of any graph statistic that can be written in terms of a finite number of connected subgraph counts, including disconnected subgraph counts and induced subgraph counts which may not necessarily have an associated local central limit theorem (see \cite[Theorem~1.3]{SS22}). Moreover, \cref{thm:main} locally controls the joint distribution of any collection of such graph statistics; in particular, since edge count is a subgraph count, it allows for local limit theorems to be proven in the $G(n,m)$ model as well.

Another application of \cref{thm:main} is to show the existence of graphs with exactly specified subgraph counts, as it implies that given any integer tuple $(x_H)_{H \in \calh}$ depending on $n$, the probability that $X_H = x_H$ for all $H$ is positive for sufficiently large $n$, provided that corresponding values of $(\gamma_H/\sigma_H)_{H \in \calh}$ are bounded. In \cref{sec:prop}, we apply this to the theory of proportional graphs, developed to describe the different asymptotic behaviors of the induced subgraph count $Y_H$ of a graph $H$ in the $G(n,p)$ and $G(n,m)$ models. Answering a question of Janson \cite{Jan94}, we show that for almost all\footnote{though not cofinitely many} rational $p \in (0,1)$ (the characterization of which we explicitly state), there exist infinitely many $H$ such that the variable $Y_H$ in the $G(n,\floor{p\binom{n}{2}})$ model satisfies $\Var[Y_H] \asymp n^{2v(H)-6}$ and converges to a nonnormal distribution. Interestingly, the integrality constraints translate to nontrivial number theory; for instance, if we set $p = \frac{1}{2}$, every such $H$ must have more than $10^{390}$ vertices!

\subsection*{Proof overview}
\subsubsection*{Fourier analysis, Stein's method, and decoupling}
The proof of \cref{thm:main} is Fourier-analytic in nature and follows the same basic outline as \cite{SS22} (which in turn generalized techniques from \cite{Ber18}). After applying Fourier inversion, we aim to show a multivariate characteristic function $\varphi^\calf_X(\vt)$ (whose precise definition we will postpone) involving the $X_H$ is close to $\varphi^\calf_Z(\vt)$, the characteristic function of a transformed multivariate normal. To accomplish this, we use two classes of techniques depending on the size of $\vt$. At small frequencies, we use a multivariate generalization of Stein's method of exchangeable pairs due to Meckes \cite{M09} to show that the $\gamma_H$ are well-approximated by independent normal random variables. At larger $\vt$, where $\varphi^\calf_Z(\vt)$ becomes negligibly small, we use a decoupling trick of Berkowitz \cite{Ber18} to show that $\varphi^\calf_X(\vt)$ is similarly small. Numerous different setups are needed to handle various cases concerning the sizes of the components of $\vt$, but as in \cite{SS22}, there is a fundamental distinction between $\vt$ of intermediate-size and $\vt$ that are extremely large.

\subsubsection*{Integral factor systems}
A major step within the proof is to identify the appropriate lattice on which to apply Fourier inversion. If one uses $(X_H)_{H \in \calh}$, the aforementioned correlations between the different $X_H$ make it so that the characteristic function is no longer concentrated sufficiently near the origin, rendering our proof method useless. On the other hand, the space of permissible $(\gamma_H)_{H \in \calh}$, where these correlations have been removed, is not a lattice at all! To resolve this issue, we introduce what we call an \vocab{integral factor system} $\calf = (F_H)_{H \in \calh}$, which behaves as a hybrid of $(X_H)$ and $(\gamma_H)$. Like $(\gamma_H)$, most of the correlations between the $X_H$ have been removed, making the characteristic function well-behaved. Moreover, the set of $(F_H)$ corresponding to integral $(X_H)$ is precisely $\setz^\calh$, which allows Fourier inversion to be used. For further discussion motivating integral factor systems, see \cref{subsec:ifsmotiv}. We believe the underlying techniques will prove useful in many situations where one seeks to establish a multivariate local limit theorem. 

\subsubsection*{Polynomial bases for graph statistics}
To facilitate the conversion between the $X_H$, $F_H$, and $\gamma_H$, we introduce a number of closely related algebraic objects that keep track of the relations between these statistics, which can be viewed as either a discrete analogue of the flag algebras of Razborov \cite{Raz07}, or a graph-theoretic analogue of the ring of symmetric functions. Just as there are many standard bases for symmetric functions (such as $m_\lambda$, $p_\lambda$, $e_\lambda$, and $s_\lambda$), we find several different bases (such as $X_H$ and $\gamma_H$) of these objects, which are all used at different points in the proof.

\subsection*{Outline}
In \cref{sec:arith}, we discuss the algebraic relations between the $X_H$ and $\gamma_H$ and construct integral factor systems. In \cref{sec:fourier} we set up the main Fourier inversion, while we control the characteristic function in \cref{sec:distrib} for small $\vt$ and in \cref{sec:decouple} for large $\vt$. In \cref{sec:prop} we discuss applications to proportional graphs and related concepts.

\subsection*{Notation and conventions}
All graphs considered in this paper are finite and simple. Given a graph $G$, we let $V(G)$ and $E(G)$ denote the vertices and edges of $G$, respectively, and let $v(G)$, $e(G)$, and $\aut G$ denote the number of vertices, edges, and automorphisms of $G$. We let $\sqcup$ denote the disjoint union of graphs, $kG$ denote the disjoint union of $k$ copies of $G$, and $\bar G$ denote the complement of $G$. We let $K_n$, $K_{m,n}$, and $P_n$ denote the complete, complete bipartite, and path graph on $n$ vertices, $m+n$ vertices, and $n$ edges, respectively. We consider the empty graph $K_0$ to be disconnected and denote it using the symbol $\varnothing$. We also let $\kone$ be a shorthand for $K_1$. Finally, given a graph $H$ and partition $\calp$ of its vertices such that no vertices in the same part are connected by an edge, we define $H/\calp$ to be a graph on $\calp$, with two partitions connected if there is at least one edge of $H$ between their individual vertex sets.

We take $f \lsim g$ to mean $f = O(g)$, with subscripts on either the $\lsim$ or the $O$ denoting dependence in the explicit constants. We let $f \asymp g$ mean $f = \Theta(g)$, with subscripts treated similarly. We define $[n] = \set{1,2,\ldots,n}$, which we at times identify with the vertices of $K_n$ and $G(n,p)$. When considering some $0 < p < 1$, we will always have $p \in (\lambda, 1-\lambda)$. Whenever considering a set of graphs $\calh$, the letter $\ell$ will always denote the maximum number of vertices of a graph in $\calh$. Note that an asymptotic with constant depending on $\ell$ is equivalent to an asymptotic with constant depending on $\calh$, as for each value of $\ell$ there are only finitely many possibilities for $\calh$.

\section{Subgraph Count Arithmetic} \label{sec:arith}
\subsection{Polynomial relations between subgraph count statistics}
In this subsection, we will treat $X_H$ and $\gamma_H$ as functions $\set{\text{graphs}} \to \setr$. We will also define the following other statistics:
\begin{defn}
For a graph $H$ let $\tilde X_H = \aut H \cdot X_H$ be the number of injective homomorphisms from $H$ into a given graph. If $H_1,\ldots,H_m$ are the connected components of $H$, then let $X^*_H = \prod_{i \in [m]} X_{H_i}$ and $\tilde X^*_H = \prod_{i \in [m]} \tilde X_{H_i}$. Define $\tilde \gamma_H$, $\gamma^*_H$, and $\tilde \gamma^*_H$ similarly.
\end{defn}
\begin{rmk}
It is not in general true that $\tilde X^*_H = \aut H \cdot X^*_H$, since $\aut(H_1 \sqcup H_2)$ is not necessarily equal to $\aut H_1 \cdot \aut H_2$.
\end{rmk}
\begin{defn}
Given two graphs $H_1$ and $H_2$, write $H_1 \preceq H_2$ if $H_1$ can be obtained from $H_2$ via a combination of edge deletion, vertex deletion, and merging disconnected vertices. Write $H_1 \prec H_2$ if $H_1 \preceq H_2$ but $H_1 \neq H_2$.
\end{defn}
This is clearly a partial order on graphs.
\begin{prop} \label{prop:linrel}
Let $\mu_H$ and $\nu_H$ denote any two of
\[X_H,\quad X_H^*,\quad (p(1-p))^{e(H)/2}\gamma_H,\quad\text{or } (p(1-p))^{e(H)/2}\gamma^*_H.\]
Then, for any $H$, one can write $\nu_H$ as a linear combination $\sum_{H'\preceq H} a_{H'} \mu_{H'}$, where $a_{H} \asymp_H 1$, $a_{H'} \lsim_H 1$ for all $H' \preceq H$, and $a_H =1$ if $H$ is connected. (Note that these bounds are independent of $p$.)
\end{prop}
\begin{proof}
It is not difficult to see that the condition in \cref{prop:linrel} between $\mu_H$ and $\nu_H$ is an equivalence relation. Therefore it suffices to check the condition three times for suitable $\mu_H$ and $\nu_H$.

First we consider $\mu_H = X_H$ and $\nu_H = X^*_H$. If $H$ is connected, we have $X^*_H = X_H$, so we are done. Otherwise, letting $H_1,\ldots,H_m$ denote the connected components of $H$, we claim that
\[\tilde X^*_H = \sum_{\calp} \tilde X_{H/\calp},\]
where $\calp$ runs over all set partitions of $V(H)$ where all elements of $V(H_i)$ are in different parts for all $i$. Indeed, the left-hand side, evaluated on a given $G$, counts the number of graph homomorphisms $H \to G$ that are injective on each $H_i$. For each such homomorphism, there is a unique such set partition describing which vertices of $H$ map to the same vertex of $G$, and the number of homomorphisms corresponding to a given set partition $\calp$ can be easily shown to be exactly $\tilde X_{H/\calp}$. Note that $H/\calp \preceq H$ for all $\calp$, and equality holds if and only if all the components of $\calp$ are singletons. Converting from the $\tilde X_H$ and $\tilde X_H^*$ to $X_H$ and $X_H^*$, we get the desired linear relation where all coefficients are independent of $p$ (and hence certainly $O_{H}(1)$) and the coefficient of $H$ is positive.

Now we consider $\mu_H = (p(1-p))^{e(H)/2}\gamma_H$ and $\nu_H = X_H$. Since $x_e = \sqrt{p(1-p)}\chi_e + p$, we conclude that
\[
\tilde X_H = \sum_{H' \subseteq H} p^{e(H) - e(H')} (p(1-p))^{e(H')/2} \tilde \gamma_{H'}
\]
and hence
\begin{equation} \label{eq:xgamma}
X_H = \sum_{H' \subseteq H} \frac{\aut H'}{\aut H} p^{e(H) - e(H')} (p(1-p))^{e(H')/2} \gamma_{H'}.
\end{equation}

Finally, we consider $\mu_H = X^*_H$ and $\nu_H = (p(1-p))^{e(H)/2}\gamma^*_H$. Here, note that by combining the previous two cases we may construct identities of the form $(p(1-p))^{e(H)/2}\gamma_H = \sum_{H' \preceq H} a_{H'} X^*_H$. Multiplying these identities for various connected $H$ yields the desired. (Here we have used the fact that if $H'_1 \preceq H_1$ and $H'_2 \preceq H_2$, then $H'_1 \sqcup H'_2 \preceq H_1 \sqcup H_2$.)
\end{proof}

We now construct the following object to encapsulate all these relations:
\begin{defn}
Let $\calr$ be the $\setr$-vector space generated by the $X_H$.
\end{defn}
\begin{cor}
The sets $\set{X_H}$, $\set{X^*_H}$, $\set{\gamma_H}$, and $\set{\gamma^*_H}$ are all bases of $\calr$. Moreover, as rings,
\[\calr = \setr[X_H \colon \text{\upshape $H$ connected}] = \setr[\gamma_H \colon \text{\upshape $H$ connected}].\]
\end{cor}
\begin{proof}
The functions $X_H$ are linearly independent, since $X_H$ is $1$ on the graph $H$ but $0$ on any graph $H' \prec H$. Thus $\set{X_H}$ is a basis of $\calr$. The fact that $\set{X_H^*}$, $\set{\gamma_H}$, and $\set{\gamma_H^*}$ are bases as well follows from \cref{prop:linrel}. The fact that $\calr = \setr[X_H : \text{$H$ connected}]$ (resp.~$\setr[\gamma_H \colon \text{$H$ connected}]$) is an alternate way of saying that the $X_H^*$ (resp.~$\gamma_H^*$) form a basis of $\calr$. 
\end{proof}
In particular, the fact that $\calr = \setr[X_H \colon \text{\upshape $H$ connected}]$ implies that $\calr$ is an $\setr$-algebra.

At this point, we may now define the terms used in \cref{thm:main}.

\begin{defn}\label{def:downwards-closed}
A finite set of non-$\kone$ connected graphs $\calh$ is \emph{downwards closed} if for all $H \in \calh$ and $H' \preceq H$ with $H'$ connected and not $\kone$, we have $H' \in \calh$. In this case, it is true that if $H' \preceq H \in \calh$, all components of $H'$ are in $\calh \cup \set{\kone}$; thus \cref{prop:linrel} implies that
\[\setr[X_H \colon H \in \calh \cup \set{\kone}] = \setr[\gamma_H \colon H \in \calh \cup \set{\kone}].\]
Call this algebra $\calr_{\calh}$.
\end{defn}
\begin{rmk}
There does not appear to be a nice basis of $\calr_\calh$ in terms of the unstarred $X_H$ or $\gamma_H$.
\end{rmk}
Now, in $\calr$, we have $X_\kone = \gamma_\kone$, and both evaluate to the number of vertices of a given graph. Thus, if we are working with graphs with a fixed number of vertices $n$, it makes sense to consider the following object.
\begin{defn}
For an integer $n$, let $\calr_n = \calr/(\gamma_\kone - n)$ and for downwards closed $\calh$ let $\calr_{\calh,n} = \calr_\calh/(\gamma_\kone - n)$. Note that the intersection of the ideal $(\gamma_\kone - n) \subseteq \calr$ with $\calr_\calh$ is the ideal generated $(\gamma_\kone - n)$ in $\calr_\calh$, so we may treat $\calr_{\calh,n}$ as a subalgebra of $\calr_n$.
\end{defn}
\begin{rmk}
Since $\calr$ was defined as a subalgebra of the algebra of functions $\set{\text{graphs}} \to \setr$, a relation between statistics in $\calr$ can be verified by checking it for all graphs. However, this is not the case for $\calr_n$; there are elements that evaluate to zero on all $n$-vertex graphs (such as $X_H$ for $H$ with more than $n$ vertices) but are not actually zero.
\end{rmk}
We may now define the notion of permissibility as being values of $(\gamma_H)_{H \in \calh}$ ``corresponding'' to integral subgraph counts, in the following manner.
\begin{defn}\label{def:permissible}
An \vocab{evaluation} is an algebra homomorphism $\Phi \colon R_{\calh,n} \to \setr$. Call an evaluation $\Phi$ \vocab{integral} if $\Phi(X_H) \in \setz$ for all $H \in \calh$. For a given integer $n$ and downwards closed $\calh$, a tuple $(y_H)_{H \in \calh}$ is \vocab{permissible} if there exists an integral evaluation $\Phi$ on $\calr_{\calh,n}$ such that $\Phi(\gamma_H) = y_H$ for all $H \in \calh$.
\end{defn}

Finally, as we will frequently work with graphs with no isolated vertices, we abbreviate ``graph with no isolated vertices'' to ``\vocab{NIV} graph''.

\subsection{Motivational remarks on integral factor systems} \label{subsec:ifsmotiv}
In order to prove \cref{thm:main}, we will use Fourier inversion, deriving it from the result that a characteristic function based on the $X_H$ is close to a characteristic function based on a multivariate normal. However, the appropriate lattice to use is far from obvious. To illuminate the types of issues that can arise, we begin with a motivational note.

As a toy example, let $(X_1,X_2)$ be integer-valued random variables (depending on $n$) and suppose we wish to show that $(X_1,X_2)$ is locally close in distribution to $(nZ_1, n^2Z_2 + f(Z_1))$, where $Z_1$ and $Z_2$ are independent standard normal random variables and $f$ is an arbitrary, but reasonably well-behaved, function (also depending on $n$). Since the map $(z_1, z_2)\mapsto (nz_1, n^2z_2 + f(z_1))$ has Jacobian determinant $n^3$ for differentiable $f$, we formally wish to show that
\[
\setp[(X_1,X_2)=(x_1,x_2)] = \frac{\caln(\frac{x_1}{n})\caln(\frac{x_2-f(x_1/n)}{n^2}) + o(1)}{n^3}.
\]

A na\"{i}ve attempt at Fourier inversion would see us define
\[\varphi_X(t_1,t_2) = \sete[e^{i(t_1X_1+t_2X_2)}]\quad\text{and}\quad\varphi_Z(t_1,t_2) = \sete[e^{i(t_1\cdot nZ_1+t_2(n^2Z_2+f(Z_1)))}],\]
so that
\begin{align*}
\setp[(X_1,X_2)=(x_1,x_2)] &= \frac{1}{(2\pi)^2} \int_{[-\pi,\pi]^2} e^{-i(t_1x_1+t_2x_2)}\varphi_X(\vt)\,d\vt \\
\intertext{and}
\frac{\caln(\frac{x_1}{n})\caln(\frac{x_2-f(x_1/n)}{n^2})}{n^3} &= \frac{1}{(2\pi)^2} \int_{\setr^2} e^{-i(t_1x_1+t_2x_2)} \varphi_Z(\vt)\,d\vt.
\end{align*}
Thus, we will be done if we can show that $\varphi_X(\vt)$ and $\varphi_Z(\vt)$ are sufficiently close.

\begin{figure}
\subcaptionbox{\label{fig:norm}}{%
\begin{tikzpicture}[scale=2,every node/.style={font={\scriptsize}}]
\draw (-1,-1) rectangle (1,1);
\fill (0,0) ellipse (0.4 and 0.1);
\draw[|-|,yshift=0.2cm] (-0.4,0)-- node[anchor=south]{$\asymp n^{-1}$}(0.4,0);
\draw[|-|,xshift=0.5cm] (0,-0.1)-- node[anchor=west]{${\asymp} n^{-2}$}(0,0.1);
\end{tikzpicture}}\quad
\subcaptionbox{\label{fig:shear}}{%
\begin{tikzpicture}[scale=2,every node/.style={font={\scriptsize}}]
\begin{scope}
\clip (-1,-1) rectangle (1,1);
\fill[rotate=-3.5] (0,0) ellipse (1.6 and 0.025);
\draw[|-|] (0.9,-0.025)--(0.9,0.025) node[anchor=south east] at (1.05,0.025){${\asymp} n^{-3}$};
\end{scope}
\draw (-1,-1) rectangle (1,1);
\draw [densely dotted] (-1,-0.025) -- (1,-0.025) (-1,0.025) -- (1,0.025);
\end{tikzpicture}}\quad
\subcaptionbox{\label{fig:intermed}}{%
\begin{tikzpicture}[scale=2,every node/.style={font={\scriptsize}}]
\fill[lightgray] (-1,-0.1) rectangle (1,0.1);
\fill (0,0) ellipse (0.4 and 0.025);
\draw (-1,-1) rectangle (1,1);
\draw[|-|,yshift=0.2cm] (-0.4,0)-- node[anchor=south]{$\asymp n^{-1}$}(0.4,0);
\draw[|-|,xshift=0.5cm] (0,-0.025)-- node[anchor=west]{${\asymp} n^{-3}$}(0,0.025);
\draw[|-|,xshift=-0.5cm] (0,-0.1)-- node[anchor=east]{${\asymp} n^{-2}$}(0,0.1);
\end{tikzpicture}}%
\caption{Conceptual drawings of $\varphi_Z(\vt)$ on $[-\pi,\pi]^2$ for \subref{fig:norm} $f(z) = 0$, \subref{fig:shear} $f(z) = \alpha n^3 z$, and \subref{fig:intermed} $f(z) = \alpha n^3 z^2$, for some constant $\alpha \asymp 1$. Black areas indicate where $\varphi_Z(\vt)$ is of constant order, gray areas indicate where $\varphi_Z(\vt)$ is polynomially small, and white areas indicate where $\varphi_Z(\vt)$ is superpolynomially small.} \label{fig:cartoons}
\end{figure}
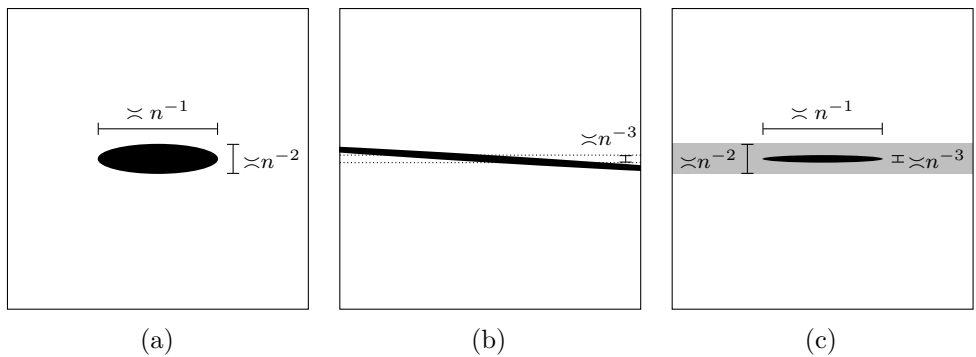

To investigate the problems that can occur, we will describe $\varphi_Z(\vt)$ for various choices of $f$. First of all, in the case where $f(z) = 0$, we simply have $\varphi_Z(\vt) = e^{-((nt_1)^2 + (n^2t_2)^2)/2}$, which is concentrated in an region of area $\asymp n^{-3}$ near the origin; this situation is shown in \cref{fig:norm}. If one could show $\abs{\varphi_X(\vt) - \varphi_Z(\vt)} = o(1)$ near the origin and $\varphi_X(\vt) = n^{-\omega(1)}$ elsewhere in $[-\pi,\pi]^2$, the local limit theorem would then be proved.

A complication occurs, however, if we have $f(z) = \alpha n^3 z$, for some constant $\alpha \asymp 1$. In this case,
\[\varphi_Z(\vt) = e^{-((nt_1+\alpha n^3 t_2)^2 + (n^2t_2)^2)/2},\]
a sheared version of the previous case; this situation is shown in \cref{fig:shear}. While the characteristic function is still well-concentrated in a region of area $n^{-3}$, the region in question is roughly a slanted rectangle of size $\Theta(1) \times \Theta(n^{-3})$, which notably escapes the square $[-\pi,\pi]^2$. In this case, our goal would be to show that $\varphi_X(\vt)$ is concentrated on the image of this box under the ``mod $2\pi$'' reduction map $\setr^2 \to [-\pi,\pi)^2$, and we would have to carefully construct a correspondence between values of $\vt$ and $\vt'$ with $\vt \equiv \vt' \pmod{2\pi}$ where we can prove $\varphi_X(\vt) = \varphi_Z(\vt') + o(1)$.

While this is already somewhat of a headache, a more serious problem arises if we introduce significant nonlinearity into $f$; for example, by letting $f(z) = \alpha n^3 z^2$ for some $\alpha \asymp 1$. In this case,
\[\varphi_Z(\vt) = e^{-(n^2t_2)^2/2} \cdot \sete[e^{i(nt_1 \cdot Z_1 + \alpha n^3 t_2 \cdot Z_1^2)}] = e^{-(n^2t_2)^2/2} \frac{e^{-n^2 t_1^2/(2-4i\alpha n^3 t_2)}}{\sqrt{1-2i\alpha n^3 t_2}},\]
which has magnitude
\[\exp\paren*{-\frac{1}{2}\paren*{n^4t_2^2 + \frac{n^2t_1^2}{1 + 4\alpha^2n^6t_2^2}}} \Big/ \paren*{1 + 4\alpha^2n^6t_2^2}^{1/4}.\]
This function is of constant order on a $\Theta(n^{-1}) \times \Theta(n^{-3})$ rectangle centered at the origin, but unlike the previous cases, it does not immediately rapidly decay, and there is now a new intermediate regime in a box of size $\Theta(1) \times \Theta(n^{-2})$ where the characteristic function is of size $\asymp n^{-1/2}$. This situation is shown in \cref{fig:intermed}. For our purposes, this is incredibly problematic, as to prove the local limit theorem one would now need bounds of the form $\abs{\varphi_X(\vt) - \varphi_Z(\vt)} = o(n^{-1})$ in the intermediate regime due to the magnitude of its area, which are implausibly strong.

To motivate the solution to these problems, we make the observation that, fundamentally, all our issues arose because $f(z)$ became much larger than $n^2$; indeed, if instead we had $f(z) = \alpha n^2 z$ or $f(z) = \alpha n^2 z^2$, one can show that although $\varphi_Z(\vt)$ would undergo some distortion, the basic picture would remain the same as the $f(z) = 0$ case, with large values in a $\Theta(n^{-1}) \times \Theta(n^{-2})$ rectangle near the origin and rapid decay away from this rectangle. To force this to happen, we transform $(X_1,X_2)$ to a different set of integer-valued random variables $(F_1,F_2)$, given by $F_1 = X_1$ and $F_2 = X_2 - g(X_1)$, where $g$ sends integers to integers. It is clear that the local limit theorem we wish to prove is equivalent to proving that $(F_1,F_2)$ is locally close in distribution to $(nZ_1, n^2Z_2 + f^*(Z_1))$, where $f^*(z) = f(z) - g(nz)$. In our examples, we may choose $g(x) = \floor{n^2\alpha}x$ and $g(x) = \floor{n\alpha}x^2$, respectively, so that $f^*(z) = n\set{n^2\alpha} z$ and $f^*(z) = n^2 \set{n\alpha} z^2$, which are sufficiently small for our purposes.

We remark that the fact that such nice choices of $g$ were available cannot be taken for granted; if we instead had to prove that $(X_1, X_2)$ was close to $(n^2 Z_1, n^3 Z_2 + \alpha n^4 Z_1^2)$, then we would need $g(n^2 z) = \alpha (n^2 z)^2 + O(n^3)$, and no integer polynomial $g$ satisfies this condition for noninteger $\alpha$. Any choice of $g$ in this case would likely be difficult to analyze, and it is doubtful that our methods would easily transfer. Thankfully, in the case of subgraph counts it is always the case that a polynomial transform will suffice.

\subsection{Integral factor systems}
We now define an integral factor system.

\begin{defn} \label{def:ifs}
Given downwards closed $\calh$, an integer $n$, and some $\eta > 0$, a collection of statistics $\calf = (F_H)_{H \in \calh}$, where $F_H \in \calr_{\calh,n}$ for all $H \in \calh$, is an \vocab{$(\calh, n, \eta)$-integral factor system} (abbreviated \vocab{$(\calh, n, \eta)$-IFS} or \vocab{IFS} if the parameters are unimportant) if for all $H \in \calh$, in $\calr_n$ we can write
\[F_H = \sum_{\NIV H' \preceq H} a_{H'} X^*_{H'} = \sum_{\NIV H' \preceq H} b_{H'} \gamma^*_{H'} = \sum_{\NIV H' \preceq H} c_{H'} \gamma_{H'},\]
where
\begin{cond}
\item \label{cond:ifs1} all $a_{H'}$ are integers and $a_H = 1$;
\item \label{cond:ifs2} $\abs{b_{H'}} \leq \eta n^{(v(H) - v(H'))/2}$ for all $H'$ and $b_H = (p(1-p))^{e(H)/2}$;
\item \label{cond:ifs3} $\abs{c_{H'}} \leq \eta n^{(v(H) - v(H'))/2}$ for all $H'$ and $c_H = (p(1-p))^{e(H)/2}$.
\end{cond}
\end{defn}

\begin{lem} \label{lem:ifsexist}
For every $\calh$ and $n$, an $(\calh, n, \eta)$-IFS exists with $\eta \lsim_{\lambda,\ell}1$.
\end{lem}

The proof of this lemma proceeds in two steps. We first prove it ignoring \cref{cond:ifs3} of \cref{def:ifs}. Then we prove that in fact, any $F_H$ satisfying \cref{cond:ifs2} must also satisfy \labelcref{cond:ifs3}, up to multiplying $\eta$ by a constant depending on $\lambda$ and $\ell$.

\begin{proof}[Proof of \cref{lem:ifsexist} ignoring \cref{cond:ifs3}]
We claim that in this case we can actually get $\abs{b_{H'}} \leq 1$ always. To start, observe that if $H$ consists of the disjoint union of an NIV graph $H'$ and $m$ isolated vertices, then $\gamma^*_{H} = n^m \gamma^*_{H'}$ in $\calr_n$. Therefore, by applying \cref{prop:linrel} with $\mu_H = \gamma^*_H$ and $\nu_H = X_H$ and multiplying such relations, we find that for any NIV graph $H$, in $\calr_n$ we can write $X^*_H$ as a linear combination of $\gamma^*_{H'}$ for NIV $H' \preceq H$ such that the coefficient of $\gamma^*_H$ is $(p(1-p))^{e(H)/2} < 1$.

We now describe a process to construct $F_H$. Start with the identity
\[X^*_H = \sum_{\NIV H'\preceq H} s_{H'} \gamma^*_{H'}.\]
While there exists some $H'$ with $\abs{s_{H'}} > 1$, choose one that is maximal under $\preceq$, and add a suitable integer multiple of an identity of the form
\[X^*_{H'} = \sum_{\NIV H''\preceq H'} s'_{H''} \gamma^*_{H''}\]
to make $\abs{s_{H'}} \leq 1$, which is possible since $s'_{H'}$ is positive but less than $1$. Note that this process must terminate, since the only coefficients that can be pushed outside of $[-1,1]$ are the $s_{H''}$ with $H'' \prec H'$. Also, we never have $H' = H$, since we had $\abs{s_{H'}} \leq 1$ at the start; thus the leading terms on both sides remain $X_H$ and $(p(1-p))^{e(H)/2} \gamma_H$, respectively. In the end, we find that an integer linear combination of $X^*_{H'}$ is equal to a linear combination of $\gamma^*_{H'}$, where all the coefficents on the right-hand side have magnitude bounded by $1$. We set $F_H$ to be this quantity.
\end{proof}

To finish the proof, we first need a small lemma.
\begin{lem} \label{lem:iv}
Let $H$ be an NIV graph, and suppose $\gamma^*_H = \sum_{H'\preceq H} a_{H'}\gamma_{H'}$ in $\calr$. Then, if $a_{H'} \neq 0$, then $H'$ has at most $v(H) - v(H')$ isolated vertices.
\end{lem}
\begin{proof}
Let $H''$ be an NIV graph, and consider $\sete_{G(n,p)}[\gamma_{H'} \gamma_{H''}]$ for arbitrary $H'$ with at most $\ell$ vertices. This is zero unless $H'$ is the disjoint union of $H''$ and some number $m$ of isolated vertices, in which case it is precisely
\[\frac{v(H'')!}{\aut H''} \binom{n}{v(H'')} \binom{n-v(H'')}{m} \asymp n^{v(H'')+m}.\]
Thus, it follows that if $\sete_{G(n,p)}[\gamma^*_H \gamma_{H''}] \lsim n^{v(H'')+m}$, then we must have $a_{H'} = 0$ for any $H'$ that is the disjoint union of $H''$ and more than $m$ isolated vertices. We now claim that $\sete_{G(n,p)}[\gamma^*_H \gamma_{H''}] \lsim n^{(v(H'')+v(H))/2}$, which will finish since any $H'$ equal to the disjoint union of $H''$ and $m$ isolated vertices with $a_{H'} \neq 0$ must satisfy
\[m \leq \frac{v(H) - v(H'')}{2} \iff m \leq v(H) - v(H'') - m = v(H) - v(H').\]

Expanding out $\gamma_H^*$, it suffices to show that for NIV graphs $H_1,\ldots,H_m$, we have \[\sete_{G(n,p)}[\gamma_{H_1} \cdots \gamma_{H_m}] \lsim n^{(v(H_1) + \cdots v(H_m))/2}.\] To see this, note that if we expand out $\gamma_{H_1} \cdots \gamma_{H_m}$ in terms of the $\chi_e$, we get a sum of many terms, indexed by copies of $H_1,\ldots,H_m$ in an $n$-vertex complete graph. Each such term is bounded in terms of $p$, and has zero expectation if some edge is covered only once. Since no $H_i$ has an isolated vertex, no edge being covered once implies that no vertex is covered once, meaning that every term with nonzero expectation uses at most $\frac{1}{2}(v(H_1) + \cdots + v(H_m))$ vertices. Furthermore, there are $O_{H_i}(1)$ possible overlap patterns given a choice of these vertices. Thus there are $O_{H_i}(n^{(v(H_1) + \cdots v(H_m))/2})$ terms with nonzero expectation, concluding the proof.
\end{proof}

Given \cref{lem:iv}, the finish is relatively straightforward:
\begin{proof}[Proof that \cref{cond:ifs2} implies \labelcref{cond:ifs3} in \cref{def:ifs}]
By \cref{prop:linrel}, we may write each $\gamma^*_{H'}$ as a sum $\sum_{H'' \preceq H'} s_{H''} \gamma_{H''}$ where $s_{H''} \lsim_{\lambda,\ell} 1$. We may ignore terms with $s_{H''} = 0$. If $H''$ is the disjoint union of an NIV graph $H^*$ and $m$ isolated vertices, then we have
\[\gamma_{H''} = \binom{n - v(H^*)}{m} \gamma_{H^*},\]
which, by \cref{lem:iv}, is in fact $O_{\ell}(n^{\frac{1}{2}(v(H') - v(H^*))}) \gamma_{H^*}$. Combining terms using \cref{cond:ifs2} yields the desired expression of $F_H = \sum_{\NIV H'\preceq H} c_{H'} \gamma_{H'}$ with $c_{H'} \lsim_{\lambda,\ell} n^{\frac{1}{2}(v(H) - v(H'))}\eta$. Moreover, since the coefficient of $\gamma_H$ in $\gamma^*_H$ is $1$, we have $c_H = (p(1-p))^{e(H)/2}$.
\end{proof}

\section{Setting up the Main Proof} \label{sec:fourier}
In this section, we use Fourier inversion to reduce \cref{thm:main} to two inequalities on a characteristic function: one (\cref{lem:distrib}) at low frequencies and one (\cref{lem:decouple}) at high frequencies, which are proven in the two subsequent sections.
\subsection{Fourier inversion}
Using \cref{lem:ifsexist}, we take an $(\calh, n, \eta)$-IFS $\calf = (F_H)_{H \in \calh}$ with $\eta \lsim_{\lambda,\ell} 1$. Observe that we may consider each $F_H$ as both a random variable on $G(n,p)$ and a function of the $(\gamma_H)_{H \in \calh}$. Thus, for $\vt = (t_H)_{H \in \calh}$, we may define \[\varphi^\calf_X(\vt) = \sete_{G(n,p)}\brac*{e^{i\sum_H t_H F_H}}\quad\text{and}\quad\varphi^\calf_Z(\vt) = \sete\brac*{e^{i\sum_H t_H F_H(\sigma_{H'} Z_{H'} \colon H' \in \calh)}},\] where the $Z_{H'}$ in the second expectation are independent standard normal random variables.

Consider an integral evaluation $\Phi$. Since $F_H$ over $G(n,p)$ is always integer-valued, by Fourier inversion we have
\[\setp_{G(n,p)}[F_H = \Phi(F_H)\;\forall H\in \calh] = \frac{1}{(2\pi)^{\abs{\calh}}}\int_{[-\pi,\pi]^\calh} e^{-i\sum_H t_H \Phi(F_H)} \varphi_X(\vt)\,d^{\abs{\calh}} \vt.\]
Meanwhile, the probability density of $(F_H(\sigma_{H'} Z_{H'} \colon H' \in \calh))_{H \in \calh}$ at $(\Phi(F_H))_{H \in \calh}$  is both 
\[\frac{1}{(2\pi)^{\abs{\calh}}}\int_{\setr^\calh} e^{-i\sum_H t_H \Phi(F_H)} \varphi_Z(\vt)\,d^{\abs{\calh}} \vt,\]
by Fourier inversion, and
\[\prod_{H \in \calh} \frac{\caln(\Phi(\gamma_H)/\sigma_{H})}{(p(1-p))^{e(H)/2}\sigma_{H}},\]
since the Jacobian determinant of the transform $(\gamma_H)_{H \in \calh} \mapsto (F_H)_{H \in \calh}$ is $\prod_{H \in \calh} (p(1-p))^{e(H)/2}$. As a result
\begin{align*}
\MoveEqLeft \abs*{\setp[\gamma_H = \Phi(\gamma_H) \; \forall H] \prod_{H \in \calh} \paren*{(p(1-p))^{e(H)/2}\sigma_{H}} - \prod_{H \in \calh} \caln(y_H/\sigma_{H})} \\
&= \prod_{H \in \calh} \paren*{(p(1-p))^{e(H)/2}\sigma_{H}} \frac{1}{(2\pi)^{\abs{\calh}}}\abs*{\int_{\setr^\calh} e^{-i\sum_H t_H \Phi(F_H)} (\varphi^\calf_X(\vt) \one_{\vt \in [-\pi,\pi]^\calh} - \varphi^\calf_Z(\vt))\,d^{\abs{\calh}}\vt} \\
&\lsim_{\ell} \prod_{H \in \calh} n^{v(H)/2}\int_{\setr^\calh} \abs{\varphi^\calf_X(\vt) \one_{\vt \in [-\pi,\pi]^\calh} - \varphi^\calf_Z(\vt))}\,d^{\abs{\calh}}\vt.
\end{align*}
To show that this expression is $O_{\lambda,\ell,\eps}(n^{-1/2 + \eps})$ and hence deduce \cref{thm:main}, it suffices to show the following three lemmas:

\begin{lem} \label{lem:distrib}
Let $\calh$ be a downwards closed set of graphs, $n$ be a positive integer, and $\calf$ be an $(\calh, n, \eta)$-IFS. If we let $\eps > 0$ and $I_\eps = \prod_{H \in \calh} [-n^{-v(H)/2+\eps}, n^{-v(H)/2+\eps}]$, then for $\vt \in I_\eps$ we have
\[\abs{\varphi^\calf_X(\vt) - \varphi^\calf_Z(\vt)} \lsim_{\lambda,\ell,\eps,\eta} n^{-1/2+4\eps}.\]
\end{lem}
\begin{lem} \label{lem:decouple}
With $\calh, n, \calf, \eps, I_\eps$ as above, we have that for $\vt \in [-\pi,\pi]^\calh \setminus I_\eps$,
\[\abs{\varphi^\calf_X(\vt)} = n^{-\omega_{\lambda,\ell,\eps,\eta}(1)}.\]
\end{lem}
\begin{lem} \label{lem:smoothness}
With $\calh, n, \calf, \eps, I_\eps$ as above, we have
\[\int_{\setr^\calh \setminus I_\eps} \abs{\varphi^\calf_Z(\vt)}\,d^k \vt = n^{-\omega_{\lambda,\ell,\eps,\eta}(1)}.\]
\end{lem}
\subsection{Local smoothness of transformed normals}
\cref{lem:smoothness} has nothing to do with graph theory, so we dispense with it here. In this section, we normalize the Fourier transform using the convention $\hat f(\vt) = \int_{\setr^d} e^{i\vt \cdot \vx} f(\vx)\,d^d \vx$.
\begin{defn}
Let $d$ be a positive integer and let $a > 0$ be a real number. A \vocab{$(d, a)$-quasishear} is a map $\Psi\colon \setr^d \to \setr^d$ given by
\[\Psi(x_1,\ldots,x_d) = (x_1 + \psi_1, x_2 + \psi_2(x_1), x_3 + \psi_3(x_1,x_2), \cdots, x_d + \psi_d(x_1,\dots,x_{d-1})),\]
where $\abs{\psi_1} \leq a$ and $\psi_i$, for $2 \leq i \leq d$, are polynomials of degree at most $a$ and all coefficients bounded by $a$.
\end{defn}
\begin{prop} \label{prop:qsi}
The inverse of a $(d, a)$-quasishear is a $(d, O_{d,a}(1))$-quasishear.
\end{prop}
\begin{proof}
Given a $(d, a)$-quasishear,
\[(x_1,\ldots,x_d) \mapsto (x_1 + \psi_1, x_2 + \psi_2(x_1), x_3 + \psi_3(x_1,x_2), \cdots, x_d + \psi_d(x_1,\dots,x_{d-1}))\]
its inverse can be defined as $(x_1,\ldots,x_d) \mapsto (y_1,\ldots,y_d)$, where the $y_i$ are defined recursively by letting $y_1 = x_1 - \psi_1$ and $y_i = x_i - \psi_i(y_1,\ldots,y_{i-1})$ for $2 \leq i \leq d$. Observe that each $y_i$ can be written as a finite expression in terms of the $x_i$ and $\psi_i$, so the degrees and coefficients must remain bounded.
\end{proof}
\begin{lem} \label{lem:qsft}
Let $\mathcal{N}\colon \setr^d \to \setr$, given by $\mathcal{N}(\vx) = (2\pi)^{-d/2} e^{-\Abs{\vx}^2_2/2}$, be the standard multivariate normal probability density. Then for any $(d,a)$-quasishear $\Psi$,
\[\abs{\widehat{\mathcal{N} \circ \Psi}(\vt)} = \Abs{\vt}_\infty^{-\omega_{d,a}(1)},\]
where the asymptotic is taken in the limit $\Abs{\vt}_\infty \to \infty$.
\end{lem}
\begin{proof}
Let $r$ be a positive integer. Then for every index $j \in [d]$, we have
\[\abs{t_j}^r \abs{\widehat{\mathcal{N} \circ \Psi}(\vt)} = \abs{(\partial_j^r\suphat{(\mathcal{N} \circ \Psi))}(\vt)} \leq \Abs{\partial_j^r(\mathcal{N} \circ \Psi)}_1.\]

We now claim that $\partial_j^r(\mathcal{N} \circ \Psi)(\vx) = (\mathcal{N} \circ \Psi)(\vx) P(\vx)$ for some polynomial $P(\vx)$ of degree and coefficients that are bounded in terms of $d$, $a$, and $r$. This can be proven by induction on $r$. The $r = 0$ case is trivial, and we note that
\[\partial_j \paren*{(\mathcal{N} \circ \Psi)(\vx) P(\vx)} = (\mathcal{N} \circ \Psi)(\vx) (-(\Psi(\vx) \cdot \partial_j \Psi(\vx))P(\vx) + \partial_j P(\vx)),\]
where we have used the fact that $\nabla \mathcal{N}(\vx) = -\vx \mathcal{N}(\vx)$. As a result, we have
\[\Abs{\partial_j^r(\mathcal{N} \circ \Psi)}_1 = \int_{\setr^d} \abs{P(\vx)} \mathcal{N}(\Psi(\vx))\,d^d \vx = \int_{\setr^d} \abs{P(\Psi\inv(\vx))} \mathcal{N}(\vx)\,d^d \vx.\]
\cref{prop:qsi} implies that $P(\Psi\inv(\vx))$ is also a polynomial with bounded degree and coefficients, so this entire integral is bounded. Since $j$ was arbitrary, we conclude that $\abs{\widehat{\mathcal{N} \circ \Psi}(\vx)} \lsim_{d,a,r}\Abs{\vt}_\infty^{-r}$. Since $r$ was arbitrary, we are done.
\end{proof}

\begin{proof}[Proof of \cref{lem:smoothness}]
Let $\Psi$ be the polynomial map sending
\[\paren*{\frac{\gamma_H}{\sigma_H}}_{H \in \calh} \mapsto \paren*{\frac{F_H}{(p(1-p))^{e(H)/2}\sigma_H}}_{H \in \calh}.\] By \cref{cond:ifs2} of \cref{def:ifs} and the fact that $\sigma_H \asymp_\ell n^{v(H)/2}$, we find that $\Psi$ is an $(\abs{\calh}, O_{\lambda,\ell}(\eta))$-quasishear.

Now, essentially by definition, $\varphi^\calf_Z(\vt) = \widehat{\caln \circ \Psi\inv}(\vt')$, where $\vt' = (t'_H)_{H \in \calh}$ is given by
\[t'_H = (p(1-p))^{e(H)/2}\sigma_H t_H.\]
Thus, by \cref{lem:qsft}, we conclude that there is a function $g$, depending on $\lambda$, $\ell$, and $\eta$, that decays superpolynomially and such that
\[\varphi^\calf_Z(\vt) \leq g\paren*{\max_{H \in \calh} {\abs{t_H} n^{v(H)/2}}}.\]
But then, changing to the integration variable $\tau = \max_{H \in \calh} {\abs{t_H} n^{v(H)/2}}$, we have
\[\int_{\setr^\calh \setminus I_\eps} \abs{\varphi^\calf_Z(\vt)}\,d^k \vt \lsim_\ell \prod_{H \in \calh} n^{-v(H)/2} \cdot \int_{n^\eps}^\infty \tau^{\abs{\calh}-1}g(\tau)\, d\tau.\]
But this is $n^{-\omega_{\lambda,\ell,\eps,\eta}(1)}$, so we are done.
\end{proof}

\subsection{Estimates on Boolean functions}
The proofs of \cref{lem:distrib,lem:decouple} will need a few results from the analysis of Boolean functions, which we state here. First, we will frequently use hypercontractivity, in the following two forms:
\begin{lem}[Moment hypercontractivity {\cite[Thm.~10.21]{O14}}] \label{lem:momhc}
Let $p \in (\lambda, 1-\lambda)$ and let $f$ be a polynomial of degree at most $D$ evaluated on independent $p$-biased random bits. Then for $q > 2$,
\[\sete[\abs{f}^q]^{1/q} \leq (\sqrt{q-1} \cdot \lambda^{1/q-1/2})^D \cdot \sete[\abs{f}^2]^{1/2}.\]
\end{lem}
\begin{lem}[Concentration hypercontractivity {\cite[Thm.~10.24]{O14}}] \label{lem:tbhc}
If $\lambda$, $D$, and nonzero $f$ are as above, then for any $t \geq (2e/\lambda)^{D/2}$, then
\[\setp[\abs{f} \geq t \sete[f^2]^{1/2}] \leq \lambda^D \exp \paren*{-\frac{D}{2e}\lambda t^{2/D}}.\]
\end{lem}
The only context in which we will use the latter result is through the following corollary.
\begin{cor} \label{cor:hyperc}
If $\lambda$, $D$, and nonzero $f$ are as above, then
\[\setp[\abs{f} \geq n^\eps \sete[f^2]^{1/2}] = n^{-\omega_{D,\lambda,\eps}(1)}.\]
\end{cor}
Finally, we have the following simple estimate on linear functions.
\begin{prop} \label{cor:linear}
Let $p \in (\lambda,1-\lambda)$ and let $\vx = (x_i)_{i \in I}$ be a vector of independent $p$-biased random bits. If $a \in \setr^I$ satisfies $\abs{a_i} \leq 1.2\pi$ for all $i \in I$, then
\[\abs{\sete[e^{i\va \cdot \vx}]} \leq e^{-\Omega_\lambda(\Abs{\va}_2^2)}.\]
\end{prop}
\begin{proof}
As the $x_i$ are independent, it suffices to show this result for $\abs{I} = 1$, i.e.~$\abs{(1-p) + pe^{ia}} \leq e^{-\Omega_\lambda(a^2)}$.
This is true since
\begin{multline*}
\abs{(1-p) + pe^{ia}}^2 = (1-p)^2+p^2+2p(1-p)\cos a \\ = 1 - 2p(1-p)(1-\cos a) \leq 1 - \frac{\lambda(1-\lambda)a^2}{5} \leq e^{-\lambda(1-\lambda)a^2/5},
\end{multline*}
where we have used the fact that $p(1-p) \geq \lambda(1-\lambda)$ and $1-\cos a \geq a^2/10$ for $\abs{a} \leq 1.2\pi$.
\end{proof}

Combining the previous results, we obtain the following result, which appears as
\cite[Theorem~3]{Ber18} and also occurs in \cite[Section~3.4.6]{SS22}, albeit both times with significantly different language. We include a proof for completeness.
\begin{lem} \label{lem:lu}
Let $p$, $\lambda$, $I$, $x_i$ be as in \cref{cor:linear}. For $i \in I$ let $\chi_i = (x_i-p)/\sqrt{p(1-p)}$ and for $S \subseteq I$ let $\chi_S = \prod_{i \in S} \chi_i$. For all $i \in I$, let $\delta_i$ be a real number. Let $\cals$ be a collection of subsets of $I$ of size at least $2$ and for all $S \in \cals$ let $\delta_S$ be a real number. Suppose $n$ is a positive integer and $\eps > 0$ such that
\begin{cond}
\item \label{cond:lu1}\label[scond]{scond:lu1}$\abs{I} \leq n^{1/\eps}$;
\item \label{cond:lu2}\label[scond]{scond:lu2}$\abs{S} \leq 1/\eps$ for all $S \in \cals$;
\item \label{cond:lu3}\label[scond]{scond:lu3}$\abs{\delta_i} \leq 1.2\pi \sqrt{p(1-p)}$ for all $i \in I$;
\item \label{cond:lu4}\label[scond]{scond:lu4}$\sum_{i \in I} \delta_i^2 \geq n^\eps$;
\item \label{cond:lu5}\label[scond]{scond:lu5}$\sum_{S \in \cals} \delta_S^2 \leq n^{-\eps}$.
\end{cond}
Then
\[\abs*{\sete\brac*{e^{i\paren*{\sum_{i \in I} \delta_i \chi_i + \sum_{S \in \cals} \delta_S \chi_S}}}} = n^{-\omega_{\lambda,\eps}(1)}.\]
\end{lem}
\begin{proof}
Let $L = \sum_{i \in I} \delta_i \chi_i$ and $U= \sum_{S\in\cals}\delta_S \chi_S$. Let $r$ be a positive integer. By Taylor approximation, we have
\[\abs*{e^{iU} - \sum_{j=0}^{r-1} \frac{(iU)^j}{j!}} \leq \frac{2\abs{U}^r}{r!},\]
so
\begin{equation} \label{eq:lubreakdown}
\abs*{\sete[e^{i(L+U)}]} = \abs*{\sete\brac*{\sum_{j=0}^{r-1} \frac{e^{iL}(iU)^j}{j!}} + \sete\brac*{e^{iL}\paren*{e^{iU} - \sum_{j=0}^{r-1} \frac{(iU)^j}{j!}}}} \leq \sum_{M \in \calm}\abs*{\sete\brac*{a_M M e^{iL}}} + \sete\brac*{\frac{2\abs{U}^r}{r!}},
\end{equation}
where $\calm$ is the set of monomials (in the $\chi_i$) occurring in $\sum_{j=0}^{r-1} \frac{(iU)^j}{j!}$ and
\[\sum_{M \in \calm} a_M M = \sum_{j=0}^{r-1} \frac{(iU)^j}{j!}.\]

Observe that by \cref{cond:lu2}, each $M \in \calm$ involves at most $r/\eps$ variables. For a given $M$, if we let $I'\subseteq I$ index the variables not in $M$, by \cref{cond:lu3,cond:lu4} we find that $\sum_{i \in I'} \delta_i^2 = \Omega_{\eps,r}(n^\eps)$. Applying \cref{cor:linear} (and \cref{cond:lu3}), we find that
\[\abs*{\sete\brac*{e^{\sum_{i \in I'} \delta_i \chi_i}}} = n^{-\omega_{\lambda,\eps,r}(1)},\]
so as a result, since $M$ is bounded in terms of $\lambda,\eps,r$, we have
\[\abs*{\sete\brac*{a_M M e^{iL}}} \leq \abs{a_M} n^{-\omega_{\lambda,\eps,r}(1)}.\]
To crudely bound $\sum_{M \in \calm} \abs{a_M}$, we note that by \cref{cond:lu5} we have $\abs{\delta_S} \leq 1$ for all $S \in \cals$. Also, by \cref{cond:lu1,cond:lu2} we find that $\abs{\cals} \leq n^{1/\eps^2}$. Therefore, if we imagine expanding out $\sum_{j=0}^{r-1} \frac{(iU)^j}{j!}$, we have at most $\sum_{j = 0}^{r-1} n^{j/\eps^2} \leq rn^{r/\eps^2}$ terms to collect, so $\sum_{M \in \calm} \abs{a_M} \leq rn^{r/\eps^2}$. As a result,
\begin{equation} \label{eq:mlterms} \sum_{M \in \calm}\abs*{\sete\brac*{a_M M e^{iL}}} \leq rn^{r/\eps^2}n^{-\omega_{\lambda,\eps,r}(1)} = n^{-\omega_{\lambda,\eps,r}(1)}.
\end{equation}

Finally, we observe that by \cref{lem:momhc}, we have
\begin{equation}\label{eq:uerror}
\sete\brac*{\frac{2\abs{U}^r}{r!}} \lsim_{\lambda,\eps,r} \sete[U^2]^{r/2} \leq n^{-r\eps/2}.
\end{equation}

Combining \labelcref{eq:lubreakdown}, \labelcref{eq:mlterms}, and \labelcref{eq:uerror}, we find that for all $r$,
\[\abs*{\sete\brac*{e^{i(L+U)}}} = n^{-\omega_{\lambda, \eps,r}(1)} + O_{\lambda,\eps,r}(n^{-r\eps/2}).\]
Since $r$ was arbitrary, we are done.
\end{proof}

\section{Distributional Result} \label{sec:distrib}
\cref{lem:distrib} is proven through a multivariate version of Stein's method of exchangeable pairs, which we now state.

For real matrices define the \vocab{Hilbert-Schmidt norm} $\Abs{A}_\HS = \sqrt{\tr(A^T A)}$ and the \vocab{operator norm} $\Abs{A}_\op = \max_{\abs{v} = 1} \Abs{Av}$. Also, for $f\in C^{k}(\setr^d)$ define
\[M_k(f) = \mathop{\smash{\sup}}_{\substack{x, u_1,\ldots,u_k \in\setr^d \\ \abs{u_i} = 1}} \abs{\partial_{u_1} \cdots \partial_{u_k} f(x)}.\]
Moreover, call two random variables $Y$ and $Y'$ \vocab{exchangeable} if $(Y,Y')$ and $(Y',Y)$ have the same distribution. 
\begin{thm}[\cite{M09}]\label{thm:exchangeable}
Let $(Y,Y')$ be an exchangeable pair of random vectors in $\setr^d$.  Suppose that
there is an invertible matrix $\Lambda$ such that $\sete\bracmid{Y'-Y}{Y} =-\Lambda Y$ and a 
random matrix $E$ such that $\sete\bracmid{E}{Y} =\sete\bracmid{(Y'-Y)(Y'-Y)^T-2\Lambda}{Y}$.
Then for $g\in C^3(\setr^d)$,
\[
\abs[\big]{\sete[g(Y)] - \sete[g(Z)]} \leq \Abs{\Lambda^{-1}}_\op \paren*{\frac{\sqrt{d}}{4}M_2(g) \sete\brac[\big]{\Abs{E}_\HS} + \frac{1}{9}M_3(g)\sete\brac[\big]{\abs{Y'-Y}^3}},
\]
where $Z$ is a standard multivariate normal on $\setr^d$.
\end{thm}
The following statement is our main distributional result.
\begin{thm} \label{thm:distrib}
Let $p\in (\lambda,1-\lambda)$ and $\calh = \set{H_1,\ldots,H_k}$ be set of nonisomorphic non-$\kone$ connected graphs. Letting $\sigma^2_{H} = \Var[\gamma_{H}]$ and $Z_1,\ldots,Z_k$ be a collection of independent standard normals, then for any 
$\psi \in C^3(\setr^k)$ we have
\[\abs*{\sete\brac[\big]{\psi(\gamma_{H_1}/\sigma_{H_{1}},\ldots, \gamma_{H_k}/\sigma_{H_{k}})}-\sete\brac[\big]{\psi(Z_1,\ldots,Z_k)}}\lsim_{\calh,\lambda} \frac{M_2(\psi)+M_3(\psi)}{n^{1/2}}.\]
\end{thm}
\begin{proof}
Aiming to apply \cref{thm:exchangeable}, our setup is as follows: let $X = (x_e)_{e \in\binom{[n]}{2}}$ be the indicator variables of the edges of $G(n,p)$. Sample $I$ from $\binom{[n]}{2}$ uniformly at random, and let $X'= (x'_e)_{e \in \binom{[n]}{2}}$ be $X$ but with $x_I$ resampled. Then, let
\[Y = \paren*{\frac{\gamma_{H_1}(X)}{\sigma_{H_1}},\ldots,\frac{\gamma_{H_k}(X)}{\sigma_{H_k}}}\quad\text{and}\quad Y' = \paren*{\frac{\gamma_{H_1}(X')}{\sigma_{H_1}},\ldots,\frac{\gamma_{H_k}(X')}{\sigma_{H_k}}}.\]
Since $X$ and $X'$ are exchangeable, so are $Y$ and $Y'$. Also, conditioning on $X$, every contribution to $\gamma_{H_i}(X)/\sigma_{H_i}$ has a $e(H_i)/\binom{n}{2}$ chance of having one of its edges resampled, which leads to a zero contribution in $\sete\bracmid{\gamma_{H_i}(X')/\sigma_{H_i}}{X,I}$. Thus, $\sete\bracmid{X'-X}{X} = -\Lambda X$
where $\Lambda = \diag(e(H_i)/\binom{n}{2})_{i \in [k]}$.
Finally, we define\[E = \sete\bracmid{(Y'-Y)(Y'-Y)^T - 2\Lambda}{X}.\] It thus remains to bound $\sete\brac[\big]{\abs{Y'-Y}^3},\sete\brac[\big]{\Abs{E}_\HS}\lsim_{\calh,\lambda}n^{-5/2}$.

First, we observe
\[\sete\brac[\big]{\abs{Y'-Y}^3} \leq \sete\brac[\big]{\abs{Y'-Y}^4}^{3/4} \leq \paren*{k\sum_{i\in[k]} \frac{\sete[(\gamma_{H_i}(X')-\gamma_{H_i}(X))^4]}{\sigma_{H_i}^4}}^{3/4}.\]
By symmetry, we may fix an $I$ in the above expectation. Then, $\gamma_{H_i}(X')-\gamma_{H_i}(X)$ consists of $O_\calh(n^{v(H_i)-2})$ terms and thus has variance $O_\calh(n^{v(H_i)-2})$. By \cref{lem:momhc}, it follows that $\sete[(\gamma_{H_i}(X')-\gamma_{H_i}(X))^4] \lsim_{\calh,\lambda} n^{2v(H_i)-4}$, and the bound follows since $\sigma_{H_i} \asymp_\calh n^{v(H_i)/2}$.

We now deal with $E$. Defining $\chi_e, \chi'_I$ similarly to the introduction and abbreviating $\prod_{e \in S} \chi_e$ as $\chi_S$, we have
\begin{align*}
\MoveEqLeft \sete\bracmid{(Y'_i - Y_i)(Y'_j - Y_j)}{X,I} \\ &= \frac{1}{\sigma_{H_i}\sigma_{H_j}}\sum_{\substack{H'_1 \cong H_i,\;H'_2 \cong H_j \\ I \in E(H'_1),E(H'_2)}}\sete\bracmid*{(\chi_{E(H'_1)\setminus I}\chi'_I - \chi_{E(H'_1)})(\chi_{E(H'_2)\setminus I}\chi'_I - \chi_{E(H'_2)})}{X,I} \\
&= \frac{1}{\sigma_{H_i}\sigma_{H_j}}\sum_{\substack{H'_1 \cong H_i,\;H'_2 \cong H_j \\ I \in E(H'_1),E(H'_2)}} (\chi_{E(H'_1)\setminus I}\chi_{E(H'_2)\setminus I} + \chi_{E(H'_1)}\chi_{E(H'_2)}),
\end{align*}
which implies that
\[E_{ij} = \frac{1}{\sigma_{H_i}\sigma_{H_j}\binom{n}{2}} \sum_{\substack{H'_1 \cong H_i,\;H'_2 \cong H_j \\ e \in E(H'_1),E(H'_2)}}(\chi_{E(H'_1)\setminus e}\chi_{E(H'_2)\setminus e} + \chi_{E(H'_1)}\chi_{E(H'_2)}) - \frac{2e(H_i)}{\binom{n}{2}} \delta_{ij},\]
where $\delta_{ij}$ is the Kronecker delta. At this point, it is easy to see that $\sete[E_{ij}] = 0$.\footnote{This also follows straightforwardly from the definition of $E$, exchangeability, and the fact that $\sete[YY^T]$ is the identity.} For future convenience, let $E'_{ij}$ be the sum in the above expression.

We may now bound
\[
\sete[\Abs{E}_\HS] \leq \sete[\Abs{E}_\HS^2]^{1/2} = \paren[\Big]{\sum_{i,j} \sete[E_{ij}^2]}^{1/2} \\ = \paren*{\sum_{i,j} \frac{\Var[E'_{ij}]}{\sigma_{H_i}^2\sigma_{H_j}^2 \binom{n}{2}^2}}^{1/2},
\]
which means that we need to show that $\Var[E'_{ij}] \lsim_{\calh,\lambda} n^{v(H_i) + v(H_j) - 1}$. To do this, we expand out
\[\Var[E'_{ij}] = \sum_{\substack{H'_1, H'_3 \cong H_i \\ H'_2,H'_4 \cong H_j}}\sum_{\substack{e_1 \in E(H'_1),E(H'_2) \\ e_2 \in E(H'_3),E(H'_4)}} \begin{multlined}[t] \Cov[\chi_{E(H'_1)\setminus e_1}\chi_{E(H'_2)\setminus e_1} + \chi_{E(H'_1)}\chi_{E(H'_2)}, \\ \chi_{E(H'_3)\setminus e_2}\chi_{E(H'_4)\setminus e_2} + \chi_{E(H'_3)}\chi_{E(H'_4)}]
\end{multlined}\]
and claim that all but $O_\calh(n^{v(H_i) + v(H_j) - 1})$ terms are zero, which finishes as each covariance is $O_{\calh,\lambda}(1)$.

If an edge appears exactly once in $E(H'_1), E(H'_2), E(H'_3), E(H'_4)$, the respective covariance vanishes. Thus, since $H_i, H_j$ have no isolated vertices, each vertex must appear in $V(H'_1), V(H'_2), V(H'_3), V(H'_4)$ at least twice. If a vertex appears more than twice, then the number of total vertices is at most $v(H_i) + v(H_j) - 1$, which corresponds to $O_\calh(n^{v(H_i) + v(H_j) - 1})$ terms. Thus we may assume that every vertex is covered exactly twice, and every edge at least twice.

$V(H'_1)$ must intersect $V(H'_2)$, as $E(H'_1) \cap E(H'_2) \neq \emptyset$. If $V(H'_1)$ also intersects $V(H'_3)$ or $V(H'_4)$, then since each vertex is covered exactly twice we may partition $V(H'_1)$ into multiple parts based on which other $V(H'_i)$ it is in. Then by the connectedness of $H_i$ there must exist an edge in $E(H'_1)$ joining two of $V(H'_2), V(H'_3), V(H'_4)$, which cannot be covered by any other edge set and thus leads to a zero covariance. Applying similar logic to $V(H'_2)$ implies that $V(H'_1) = V(H'_2)$, which in turn implies that $V(H'_3) = V(H'_4)$. However, in this case, the two variables in the covariance are independent, so we are done.
\end{proof}

\begin{proof}[Proof of \cref{lem:distrib}]
Let $\psi(z_H \colon H \in \calh) = e^{i\sum_H t_H F_H(\sigma_{H'} z_{H'} \colon H' \in \calh)}$ and let $\kappa\colon [0,\infty) \to [0,1]$ be a smooth decreasing cutoff function that is $1$ on $[0,1]$ and $0$ on $[2,\infty)$. Furthermore, let $\psi'(\vz) = \psi(\vz) \kappa(\Abs{\vz}_2/n^{\eps'})$, where $\eps' > 0$ is to be determined later.

Let $Y = (\gamma_H/\sigma_H)_{H \in \calh}$ and $Z$ be an $\calh$-indexed vector of independent standard normals. Then the quantities $\varphi^\calf_X(\vt)$ and $\varphi^\calh_Z(\vt)$ are precisely $\sete[\psi(Y)]$ and $\sete[\psi(Z)]$. By \cref{cor:hyperc} and direct computation, respectively, we have $\Abs{Y}_2, \Abs{Z}_2 \leq n^{\eps'}$ with probability $1 - n^{-\omega_{\lambda,\ell,\eps'}(1)}$, so since $\abs{\psi'(\vz) - \psi(\vz)} \leq 1$ always we have
\[\abs{\varphi^\calf_X(\vt) - \varphi^\calf_Z(\vt)} = \abs{\sete[\psi(Y) - \psi(Z)]} \leq \abs{\sete[\psi'(Y) - \psi'(Z)]} + n^{-\omega_{\lambda,\ell,\eps'}(1)}.\]

After applying \cref{thm:distrib} to $\psi'$, it suffices to show that $M_2(\psi'), M_3(\psi') \lsim_{\ell,\eps,\eta} n^{4\eps}$. To see this, observe that \cref{cond:ifs2} of \cref{def:ifs} implies that $F_H(\sigma_H z_H \colon H \in \calh)/n^{v(H)/2}$ is a polynomial in $\vz$ of bounded degree and coefficients bounded by $O_\ell(\eta)$. Thus, $\psi(\vz) = e^{if(\vz)}$ for some polynomial $f$ of bounded degree and coefficients bounded by $O_\ell(\eta \max_{H \in \calh} {\abs{t_H} n^{v(H)/2}}) \leq O_\ell(\eta n^\eps)$. Since taking $k$ partial derivatives of $e^{if}$ causes at most $k$ factors of $f$ and its derivatives to appear in front of the exponential, we conclude that $\abs{\partial_{u_1} \cdots \partial_{u_k} e^{if}} \lsim_{\ell,k} (1+\eta n^\eps)^k (1+\Abs{\vz}_2)^{O_{\ell,k}(1)}$ for unit vectors $u_1,\ldots,u_k$.

Since $\psi'$ is identically zero for $\Abs{\vz}_2 \geq 2n^{\eps'}$ its derivatives are certainly zero. Since $\kappa$ and its derivatives are bounded at other $\vz$, we conclude that $M_k(\psi') \lsim_{\ell,k} (1+\eta n^\eps)^k n^{O_{\ell,k}(\eps')}$. By choosing $\eps'$ to be sufficiently small, we therefore can get $M_2(\psi'), M_3(\psi') \lsim_{\ell,\eps,\eta} n^{4\eps}$, as desired.
\end{proof}

\section{Decoupling Methods} \label{sec:decouple}
In this section, we will prove \cref{lem:decouple}, which will complete the proof of \cref{thm:main}. Conceptually, this is quite simple: in \cref{subsec:dineq} we state a decoupling inequality, which we use to transform $\varphi^\calf_X(\vt)$ into an expression that can be bounded using \cref{lem:lu}. However, the precise details in this procedure are quite technical, and the bulk of the work consists of showing that regardless of the value of $\vt$, there is always exists a decoupling setup that will be effective. To accomplish this, in \cref{subsec:gendec} we develop a list of conditions (\cref{lem:decconditions}) required for decoupling to work in the case of random graphs. In \cref{subsec:setup1,subsec:setup2} we identify two families of decoupling setups that satisfy the conditions in \cref{lem:decconditions}, which are subsequently applied in \cref{subsec:decproof}.

\subsection{The decoupling inequality} \label{subsec:dineq}
Let $k$ be a nonnegative integer (which we will call the \vocab{decoupling parameter}). Following \cite{Ber18}, for a function $f \colon \Omega_0 \times \prod_{i=1}^k \Omega_k \to \setr$, where $\Omega_0, \Omega_1, \ldots, \Omega_k$ are arbitrary sets, define $\alpha(f) \colon \Omega_0 \times \prod_{i=1}^k \Omega_k^2 \to \setr$ given by
\[\alpha(f)(x, y_1^0, y_1^1,  \ldots, y_k^0, y_k^1) = \sum_{(i_1,\ldots,i_k) \in \set{0,1}^k} (-1)^{i_1 + \cdots + i_k} f(x, y_1^{i_1}, \ldots, y_k^{i_k}).\]
We then have the following result, which we prove for completeness.
\begin{lem}[\cite{Ber18}] \label{lem:dec}
Suppose $X, Y_1, Y_2, \ldots, Y_k$ are random variables on $\Omega_0, \Omega_1, \Omega_2, \ldots, \Omega_k$, respectively. If for $i \in [k]$ we let $Y'_i$ be an independent copy of $Y_i$, then
\[\abs[\big]{\sete_{X, Y}[e^{if(X,Y)}]}^{2^k} \leq \sete_{\vY}\brac*{\abs[\big]{\sete_X[e^{i\alpha(f)(X,\vY)}]}},\]
where we abbreviate $Y = (Y_1,\ldots,Y_k)$ and $\vY = (Y_1,Y'_1,\ldots,Y_k,Y'_k)$.
\end{lem}
\begin{proof}
We proceed with induction on $k$, with the $k=0$ case being trivial. Now assume the result is true for $k-1$ and let $\tilde Y = (Y_1,\ldots,Y_{k-1})$ and $\tilde \vY = (Y_1,Y'_1,\ldots,Y_{k-1},Y'_{k-1})$. By Cauchy-Schwarz we have
\[\abs[\big]{\sete_{X, Y}[e^{if(X,Y)}]}^{2^k} \leq \abs*{\sete_{X,\tilde Y}\brac*{\abs[\big]{\sete_{Y_k}[e^{if(X,\tilde Y,Y_k)}]}^2}}^{2^{k-1}} = \abs*{\sete_{X,\tilde Y}\brac*{\sete_{Y_k,Y'_k}[e^{i(f(X,\tilde Y,Y_k)-f(X,\tilde Y,Y'_k))}]}}^{2^{k-1}}.\]
Define the function
\[\tilde f((x, y_k,y'_k), y_1, \ldots, y_{k-1}) = f(x,y_1,\ldots,y_{k-1},y_k) - f(x,y_1,\ldots,y_{k-1},y'_k),\]
and observe that $\alpha(\tilde f)((X,Y_k,Y_k'),\tilde \vY) = \alpha(f)(X,\vY)$. Applying the inductive hypothesis to $\tilde f$, the quantity is therefore bounded by
\[\sete_{\tilde \vY}\brac*{\abs[\big]{\sete_{X,Y_k,Y_k'}[e^{i\alpha(f)(X,\vY)}]}} \leq \sete_{ \vY}\brac*{\abs[\big]{\sete_{X}[e^{i\alpha(f)(X,\vY)}]}},\]
as desired.
\end{proof}

\subsection{Decoupling on random graphs} \label{subsec:gendec}
Let $\calh$ be a set of (isomorphism classes of) NIV graphs on at most $\ell$ vertices. For $H \in \calh$, let $\Delta_H$ be real constants and define the statistic $f = \sum_{H\in \calh} \Delta_H \gamma_H$.

Take a nonnegative integer $k$ and a partition of $\binom{[n]}{2}$ into sets $A, B_1,\ldots,B_k,C$. Sample all the edges with probability $p$, letting $x_e$ and $\chi_e$ denote the indicator variable of an edge $e$ and its normalization, respectively. Furthermore, resample all edges in $\bigcup_i B_i$, defining $x'_e$ and $\chi'_e$ similarly. Let $X = (x_e)_{e\in A}$, $Y_i = (x_e)_{e \in B_i}$, $Y'_i = (x'_e)_{e \in B_i}$, and $W = (x_e)_{e \in C}$. As $f$ is naturally a function of $(X,W)$ and $Y_1,\ldots,Y_k$, we may apply \cref{lem:dec} to get
\begin{equation} \label{eq:maindec}
\abs{\sete_{G(n, p)}[e^{if}]}^{2^k} \leq \sete_{\vY} \brac*{\abs*{\sete_{X,W} e^{i\alpha(f)(X,W,\vY)}}} \leq \sete_{\vY,W} \brac*{\abs*{\sete_X e^{i\alpha(f)(X,W,\vY)}}}.
\end{equation}
For a set $S \subseteq A$, let $\delta_S$ be the function of $W, \vY$ such that $\alpha(f)(X,W,\vY) = \sum_S \delta_S \chi_S$. (Abbreviate $\delta_e = \delta_{\set{e}}$.) For a set of edges $T$ disjoint from $A$, define
\[\zeta_T = \smashoperator{\prod_{e \in T \cap C}} \chi_e \cdot  \prod_{i \in [k]} \paren*{\smashoperator[r]{\prod_{e \in T \cap B_i}} \chi_e - \smashoperator[r]{\prod_{e \in T \cap B_i}} \chi_e'}.\]
Call a set of edges, and by extension a subgraph, \emph{rainbow} if it intersects $B_i$ for all $i$. For a graph $H$ and a subset $S \subseteq A$, let $N_{H, S}$ be the number of rainbow copies $H'$ of $H$ such that $E(H') \cap A = S$.

We make the following easy observations:
\begin{enumerate}
    \item Each $\zeta_T$ is a polynomial in the $x_e$ of degree at most $\abs{T}$.
    \item \label{item:dec2} If $T$ is not rainbow, $\zeta_T = 0$.
    \item \label{item:dec3} For rainbow $T$ and $T'$, we have $\sete[\zeta_T\zeta_{T'}] = 2^k\cdot  \one_{T=T'}$.
    \item If $T_1,T_2,T_3,T_4$ are such that there exists an edge contained within exactly one $T_i$, then $\sete[\zeta_{T_1} \zeta_{T_2} \zeta_{T_3} \zeta_{T_4}] = 0$.
    \item \label{item:dec5} We have
    \[\delta_S = \sum_{H \in \calh} \Delta_{H} \;\smashoperator{\sum_{\substack{H'\cong H \\ E(H') \cap A = S}}} \zeta_{E(H') \setminus A}.\]
    \item As a consequence of \labelcref{item:dec2,item:dec3,item:dec5}, we have $\sete[\delta_S^2] = 2^k \sum_{H \in \calh} \Delta_H^2 N_{H,S}$. (Note that this deduction requires all $H\in\calh$ to have no isolated vertices, since otherwise two different graphs could have the same edge sets.)
\end{enumerate}

We may now express our conditions for successful decoupling in terms of the following lemma:
\begin{lem} \label{lem:decconditions}
Suppose there is a nonempty subset $\calh' \subseteq \calh$ and some $\eps > 0$ such that
\begin{cond}
    \item \label{cond:g1} \label[scond]{scond:g1} We have one of the following:
    \begin{cond}
        \item \label{cond:g1a} \label[scond]{scond:g1a} $\Delta^2_H N_{H, e} \leq n^{-\eps}$ for all $H \in \calh'$ and $e \in A$;
        \item \label{cond:g1b} \label[scond]{scond:g1b} For $e \in A$ we always have
        \[\abs*{\sum_{H \in \calh'} \Delta_H \;\smashoperator{\sum_{\substack{H'\cong H \\ E(H') \cap A = \set{e}}}} \zeta_{E(H') \setminus A}} \leq 1.1\pi\sqrt{p(1-p)}.\]
    \end{cond}
    \item \label{cond:g2} \label[scond]{scond:g2} $\sum_{H \in \calh'} \sum_{e \in A} \Delta^2_H N_{H, e} \geq n^\eps$;
    \item \label{cond:g3} \label[scond]{scond:g3} $\sum_{e \in A}  \Delta^2_H N_{H, e} \leq n^{-\eps}$ for all $H \in \calh \setminus \calh'$;
    \item \label{cond:g4} \label[scond]{scond:g4} $\sum_{\abs{S} \geq 2} \Delta^2_H N_{H, S} \leq n^{-\eps}$ for all $H \in \calh$;
    \item \label{cond:g5} \label[scond]{scond:g5} We have
    \[\sum_{(H_1,H_2,H_3,H_4) \in Q} \Delta_{H_1}\Delta_{H_2}\Delta_{H_3}\Delta_{H_4} \leq n^{-\eps} \paren*{\sum_{H \in \calh'} \sum_{e \in A} \Delta^2_H N_{H, e}}^2,\]
    where $Q$ is the set of quadruples $(H_1,H_2,H_3,H_4)$ of subgraphs such that
    \begin{cond}
        \item \label{cond:g5a} \label[scond]{scond:g5a} $H_1, H_2, H_3, H_4$ are isomorphic to elements of $\calh'$ and are rainbow ($\Delta_{H_i}$ denotes $\Delta_H$ where $H\in\calh'$ is isomorphic to $H_i$);
        \item \label{cond:g5b} \label[scond]{scond:g5b} $E(H_1) \cap A = E(H_2) \cap A = \set{e}$ for some $e \in A$;
        \item \label{cond:g5c} \label[scond]{scond:g5c} $E(H_3) \cap A = E(H_4) \cap A = \set{e'}$ for some $e' \in A$;
        \item \label{cond:g5d} \label[scond]{scond:g5d} $(E(H_1) \cup E(H_2)) \cap (E(H_3) \cup E(H_4))$ is nonempty;
        \item \label{cond:g5e} \label[scond]{scond:g5e} No edge lies in exactly one of the $E(H_i)$.
    \end{cond}
\end{cond}
Then $\sete_{G(n,p)}[e^{if}] = n^{-\omega_{\lambda,\ell,\eps}(1)}$.
\end{lem}
\begin{proof}
Note that we must have $k < \binom{\ell}{2}$, since otherwise we would always have $N_{H, e} = 0$ and \cref{cond:g2} would be impossible to satisfy. Thus, by \labelcref{eq:maindec}, it suffices to show that
\[\sete_{\vY,W} \brac*{\abs*{\sete_X e^{i\alpha(f)(X,W,\vY)}}} = n^{-\omega_{\lambda,\ell,\eps}(1)}.\]
Since $e^{i\alpha(f)(X,W,\vY)}$ is bounded, we will show that, with the $\delta_e$ and $\delta_S$ defined as above, the conditions of \cref{lem:lu} (with $I = A$) hold with probability $1 - n^{-\omega_{\lambda,\ell,\eps}(1)}$ (call this \vocab{high probability} for the remainder of the proof) for some suitable $\eps'(\lambda,\ell,\eps)$ (it is clear that the $\delta_\emptyset$ term can be ignored). This will finish the proof. To prevent ambiguity, we will let $\eps'$ refer to the $\eps$ in \cref{lem:lu} and denote the conditions of \cref{lem:lu} with square brackets.

\Cref{scond:lu1,scond:lu2} are obvious provided that $\eps' \leq \ell^{-2}$. Moreover, since there are only polynomially many $S$, by \cref{cor:hyperc} we know that with high probability $\delta_S^2 \leq n^{\eps/2} \sete[\delta_S^2]$ for all $S$. In this case, by \cref{cond:g4} we have 
\[\sum_{\abs{S} \geq 2} \delta_S^2 \leq 2^k n^{\eps/2} \sum_{H \in \calh} \sum_{\abs{S}\geq 2} \Delta_H^2 N_{H,S} = O_\ell(n^{-\eps/2}).\]
Thus \cref{scond:lu5} is satisfied provided that $\eps' < \eps/2$.

To deal with \cref{scond:lu3,scond:lu4}, we decompose $\delta_e = \delta'_e + r_e$, where
\[\delta'_e = \sum_{H \in \calh'} \Delta_H \;\smashoperator{\sum_{\substack{H'\cong H \\ E(H') \cap A = \set{e}}}} \zeta_{E(H') \setminus A} \quad\text{and}\quad r_e = \sum_{H \in \calh\setminus\calh'} \Delta_H \;\smashoperator{\sum_{\substack{H'\cong H \\ E(H') \cap A = \set{e}}}} \zeta_{E(H') \setminus A}.\]
By \cref{cond:g3} and \cref{cor:hyperc}, we have $\abs{r_e} \leq n^{-\eps/4}$ for all $e$ with high probability. If \cref{cond:g1a} holds, then we also have $\abs{\delta'_e} \leq n^{-\eps/4}$ for all $e$ with high probability, which yields \cref{scond:lu3}. \Cref{cond:g1b} simply states that $\abs{\delta'_e} \leq 1.1\pi\sqrt{p(1-p)}$ always, so combining this with our bound on $r_e$ also yields \cref{scond:lu3}.

It remains to show \cref{scond:lu4}, which is the most involved part of the proof. First of all, by the triangle inequality on $\setr^A$ we have
\[\sum_{e \in A} \delta_e^2 \geq \paren*{\sqrt{\sum_{e \in A} (\delta'_e)^2} - \sqrt{\sum_{e \in A} r_e^2}}^2.\]
By \cref{cor:hyperc}, we have $r_e^2 \leq n^{\eps/2} \sete[r_e^2]$ for all $e$ with high probability, and in this case by \cref{cond:g3} it is true that $\sqrt{\sum_{e \in A} \sete[r_e^2]} \lsim_\ell n^{-\eps/4}$. Thus it suffices to show that $\sum_{e \in A}(\delta'_e)^2 \geq n^{\eps'}$ for some $\eps'>0$ with high probability.

To show this, we apply \cref{cor:hyperc} on $\sum_{e \in A}((\delta'_e)^2 - \sete[(\delta'_e)^2])$; in light of \cref{cond:g2}, for this to work, it suffices to show that
\begin{equation}\label{eq:sqdconcen}
\Var\brac*{\sum_{e \in A}(\delta'_e)^2} \leq n^{-\eps/2}\paren*{\sum_{e\in A}\sete[(\delta'_e)^2]}^2
\end{equation}
for large $n$.

At this point, we expand
\[
\Var\brac*{\sum_{e \in A}(\delta'_e)^2} =\sum_{H_1,H_2,H_3,H_4} \Delta_{H_1}\Delta_{H_2}\Delta_{H_3}\Delta_{H_4} \Cov[\zeta_{E(H_1) \setminus A}\zeta_{E(H_2) \setminus A},\zeta_{E(H_3) \setminus A}\zeta_{E(H_4) \setminus A}]\]
where we sum over $H_1,H_2,H_3,H_4$ satisfying \cref{cond:g5a,cond:g5b,cond:g5c}. If such a quadruple is not in $Q$, it either violates \labelcref{cond:g5e}, in which case
\[\sete[ \zeta_{E(H_1) \setminus A}\zeta_{E(H_2) \setminus A}\zeta_{E(H_3) \setminus A}\zeta_{E(H_4) \setminus A}] = \sete[ \zeta_{E(H_1) \setminus A}\zeta_{E(H_2) \setminus A}]\sete[\zeta_{E(H_3) \setminus A}\zeta_{E(H_4) \setminus A}] = 0,\]
or it violates \labelcref{cond:g5d}, in which case $\zeta_{E(H_1) \setminus A}\zeta_{E(H_2) \setminus A}$ and $\zeta_{E(H_3) \setminus A}\zeta_{E(H_4) \setminus A}$ are independent and the entire term is $0$. Since the $\zeta_T$ are bounded in terms of $\lambda$ and $\ell$, we conclude that
\[\sete\brac*{\paren*{\sum_{e \in A}((\delta'_e)^2 - \sete[(\delta'_e)^2])}^2} \lsim_{\lambda,\ell} \sum_{(H_1,H_2,H_3,H_4) \in Q} \Delta_{H_1}\Delta_{H_2}\Delta_{H_3}\Delta_{H_4}.\]
So \cref{cond:g5} is exactly what we need to conclude \labelcref{eq:sqdconcen} for large $n$. This concludes the proof.
\end{proof}

\subsection{Decoupling setup I: midscale uniformity} \label{subsec:setup1}
\begin{lem} \label{lem:setup1}
Let $0 \leq \alpha,\beta \leq 1-\eps$ be real parameters and let $k$ be a nonnegative integer. Then $\abs{\sete[e^{if}]} = n^{-\omega_{\lambda,\ell,\eps}(1)}$, provided that
\begin{cond}
	\item \label{cond:m1} For all $H$ with $k+2$ vertices, $n^{k+2}\Delta_H^2 \leq n^{2+k\beta-\eps}$;
    \item \label{cond:m2} There is some connected $H$ with $k + 2$ vertices such that $n^{2\alpha+k\beta+\eps} \leq n^{k+2}\Delta_H^2$;
    \item \label{cond:m3} For every $H$ with greater than $k+2$ vertices 
    we have $n^{v(H)}\Delta_H^2 \leq n^{2\alpha+k\beta - \eps}$.
\end{cond}
\end{lem}
\begin{proof}
We let $k$ be the decoupling parameter. Construct $k+1$ disjoint vertex sets $V_0,V_1,\ldots,V_k$ such that $\abs{V_0} = \floor{\frac{1}{k+1}n^{1-\alpha}}$ and $V_i = \floor{\frac{1}{k+1} n^{1-\beta}}$ for $i \in [k]$. Let $A$ be edges within $V_0$, $B_i$ be edges with one endpoint in $V_i$ and one in $V_j$ for some $0 \leq j \leq i$, and $C$ be all other edges. It suffices to show that the conditions of \cref{lem:decconditions} hold for large $n$ and with $\eps$ replaced with $\eps/2$. Similar to before, we denote the conditions of \cref{lem:decconditions} with square brackets to prevent ambiguity.

Observe that any rainbow graph must have at least one vertex in $V_i$ for all $i \in [k]$. Thus
\begin{equation} \label{eq:nhs1}
N_{H,S} \lsim_\ell n^{v(H) -  v(S) - k\beta},
\end{equation} where $v(S)$ denotes the number of vertices adjacent to at least one edge in $S$. Furthermore, we claim that this is tight if $H$ is connected with $k+2$ vertices and $S$ is an edge $e$. To see this, pick any edge $e'$ in $H$ and label the vertices in $H$ not adjacent to $e'$ with the numbers $1, \ldots, k$ such that every vertex is adjacent to a vertex of $e'$ or a vertex with a smaller label. Then, sending $e'$ to $e$ and the vertex labeled $i$ to an arbitrary vertex in $V_i$ yields a rainbow copy of $H$; moreover, this can be done in $\Omega_{\ell,\eps}(n^{k(1-\beta)})$ ways. 

It is now simple to check \cref{scond:g1,scond:g2,scond:g3,scond:g4}. We let $\calh'$ consist of graphs in $\calh$ with $k+2$ vertices. Then by \cref{cond:m1} and \labelcref{eq:nhs1}, we find that for all $H \in \calh'$ and $e \in A$ we have $\Delta_H^2 N_{H,e} \lsim_{\ell,\eps} n^{-\eps}$, implying \cref{scond:g1a}. By \cref{cond:m2}, there is some $H \in \calh'$ with $\Delta_H^2 N_{H,e} \gsim_{\ell,\eps} n^{-2(1-\alpha) + \eps}$ for all $e \in A$; summing over the $\Theta_{\ell,\eps}(n^{2(1-\alpha)})$ elements of $A$ yields \cref{scond:g2}.

For all $H \in \calh \setminus \calh'$, it is either the case that $H$ has fewer than $k+2$ vertices, in which case $N_{H,S} = 0$ for all $S \subseteq A$, or that \cref{cond:m3} applies, which implies that  $\Delta_H^2 N_{H,S} \lsim_{\ell,\eps} n^{2\alpha - v(S)-\eps}$. For each isomorphism class of the graph associated with $S$, there are $\Theta_{\ell,\eps}(n^{(1-\alpha) v(S)})$ corresponding subsets of $A$, so summing over the $O_\ell(1)$ such isomorphism classes yields that $\sum_{\abs{S} \geq 1} \Delta_H^2 N_{H,S} \lsim_{\ell,\eps} n^{-\eps}$, proving both \cref{scond:g3,scond:g4} for $H \in \calh \setminus \calh'$. (Here we have used $v(S) \geq 2$.) \Cref{scond:g4} also holds for $H \in \calh'$ as well since for such $H$ we have $N_{H,S} = 0$ whenever $v(S) \geq 3$, which is the case for all $S$ with $\abs{S} \geq 2$.

Finally, to show \cref{scond:g5}, we claim that $\abs{Q} \lsim_{\ell,\eps} n^{4(1-\alpha)+2k(1-\beta)-\eps}$, which will suffice as we will then have
\[\sum_{(H_1,H_2,H_3,H_4) \in Q} \Delta_{H_1}\Delta_{H_2}\Delta_{H_3}\Delta_{H_4} \lsim_{\ell,\eps} n^{4(1-\alpha)+2k(1-\beta)-\eps} \max_{H \in \calh'} \Delta_H^4,\]
while the tightness of our bound on $N_{H,e}$ for $H \in \calh'$ gives
\[\paren*{\sum_{e \in A} \sum_{H \in \calh'}  \Delta^2_H N_{H, e}}^2 \gsim_{\ell,\eps} n^{4(1-\alpha)+2k(1-\beta)} \max_{H \in \calh'} \Delta_H^4.\]
To see this, consider some $(H_1,H_2,H_3,H_4) \in Q$ and let $V = \bigcup_{i \in [4]} V(H_i)$. We know that each $H_i$ must have two vertices in $V_0$, so $\abs{V \cap V_0} \leq 4$. Also, each $H_i$ must have exactly one vertex in each $V_j$ (for $j \in [k]$), and moreover, if some vertex $v \in V_j$ is a vertex of exactly one $H_i$, then the edges in $E(H_i) \cap B_j$ cannot be in $H_{i'}$ for any $i' \neq i$. Therefore $\abs{V \cap V_j} \leq 2$.

Furthermore, suppose for the sake of contradiction that equality holds in all of these bounds, i.e.~that $\abs{V \cap V_0} = 4$ and $\abs{V \cap V_j} = 2$ for all $j \in [k]$. By \cref{scond:g5d}, the set $V^* = (V(H_1) \cup V(H_2)) \cap (V(H_3) \cup V(H_4))$ is nonempty, and let $j$ be minimal such that $V^* \cap V_j$ is nonempty. We cannot have $j = 0$, as it would contradict \cref{scond:g5b,scond:g5c} and our assumption that $\abs{V \cap V_0} = 4$. Thus, we may assume, after possibly swapping $H_3$ and $H_4$, that $V(H_1) \cap V_j = V(H_3) \cap V_j$ consists of one vertex that is not the element of $V(H_2) \cap V_j$. However, since $H_1$ is rainbow, it contains an edge in $B_j$, and the only way for that edge to be covered by another $H_i$ is if it is in $H_3$. But this implies that $H_1$ and $H_3$ share a vertex in $V_{j'}$ for some smaller $j'$, contradicting the minimality of $j$. We conclude that we cannot have $\abs{V \cap V_0} = 4$ and $\abs{V \cap V_j} = 2$ for all $j \in [k]$.

Consequently, the number of choices for $V$ is bounded above by
\[\abs{V_0}^3 \prod_{i \in [k]} \abs{V_i}^2 + \sum_{i \in [k]} \paren*{\abs{V_0}^4 \abs{V_i} \prod_{j \in [k] \setminus \set{i}} \abs{V_j}^2} \lsim_{\ell} n^{4(1-\alpha)+2k(1-\beta)} (n^{\alpha-1} + n^{\beta - 1}) \lsim n^{4(1-\alpha)+2k(1-\beta)-\eps}.\]
Since $V$ is of bounded size, there are $O_\ell(1)$ many elements of $Q$ with a given $V$, so we have proved the desired bound.
\end{proof}

\subsection{Decoupling setup II: hyperlocal uniformity} \label{subsec:setup2}
\begin{lem} \label{lem:setup2}
Let $H\in \calh$ be connected and let $\eps \leq \alpha \leq 1$ be real. We get $\abs{\sete[e^{if}]} = n^{-\omega_{\lambda,\ell,\eps}(1)}$ as long as
\begin{cond}
    \item \label{cond:h1} $n^{-\alpha+\eps} \leq \Delta_H^2 \leq 1.1\pi^2 (p(1-p))^{e(H)}$;
    \item \label{cond:h2} For any graph $H' \succ H$, we have $n^{v(H')} \Delta_{H'}^2 < n^{v(H)- \alpha - \eps}$.
\end{cond}
\end{lem}
\begin{proof}
We let $k = e(H) - 1$ and arbitrarily label the edges of $H$ as $e_0,\ldots,e_k$. Consider $\floor{n^{\alpha}/v(H)}$ vertex-disjoint copies of $H$ (call these \vocab{standard copies}) and let $A$ be the set of all copies of $e_0$, while for all $i \in [k]$ let $B_i$ be the set of all copies of $e_i$. Place all other edges in $C$. We will let $\calh' = \set{H}$.

Call a rainbow graph \vocab{super-rainbow} if it has nontrivial intersection with $A$. Observe that every super-rainbow graph $H'$ must satisfy $H' \succeq H$ (by ``gluing together'' the standard copies). Moreover, we claim that a super-rainbow graph $H'$ containing edges from $a$ standard copies of $H$ has at least $v(H) + a - 1$ vertices. This can be proven by induction; the base case $a = 1$ is clear, and if $a \geq 2$ we may first assume without loss of generality that $E(H') \cap C = \emptyset$. Then, by the connectedness of $H$, there are two standard copies of $H$ and two corresponding vertices in those copies that are both in $V(H')$. After moving all edges in one of these copies to the other, which necessarily decreases the vertex count by at least $1$, we may apply the inductive hypothesis. One immediate consequence of this claim are that the standard copies are the only super-rainbow copies of $H$. A less-immediate consequence is that for any $H' \in \calh$, the number of super-rainbow copies of $H'$ is $O_\ell(n^{v(H')-v(H)+\alpha})$. To see this, suppose such a copy has edges in $a$ standard copies, which are incident to $b \geq v(H) + a - 1$ vertices. Then, each copy can be produced by choosing edges in the standard copies, which can be done in $O_\ell(n^{\alpha a})$ ways, identifying those chosen edges with a subgraph of $H'$, which can be done in $O_\ell(1)$ ways, and choosing the rest of the vertices of $H'$, which can be done in $O_\ell(n^{v(H') - b})$ ways, for $O_\ell(n^{v(H') - b + a +(\alpha-1) a}) \leq O_\ell(n^{v(H') - v(H) +\alpha})$ ways in total. The bound follows from summing over the $O_\ell(1)$ possible values of $a$ and $b$.

We are now ready to check the conditions of \cref{lem:decconditions} with $\eps$ replaced with $\eps/2$, which we again denote using square brackets. Observe that if $H'$ is a standard copy of $H$, then $\zeta_{E(H') \setminus A}$ is a product of $k$ terms that are bounded by $1/\sqrt{p(1-p)}$; thus since $N_{H,e} = 1$ for all $e \in A$ by \cref{cond:h1} we find that we always have
\[\abs*{\Delta_H \;\smashoperator{\sum_{\substack{H'\cong H \\ E(H') \cap A = \set{e}}}} \zeta_{E(H') \setminus A}} \leq \sqrt{1.1}\,\pi \frac{(p(1-p))^{e(H)/2}}{(p(1-p))^{k/2}} = \sqrt{1.1}\,\pi\sqrt{p(1-p)},\]
showing \cref{scond:g1b}. Moreover, \cref{cond:h1} also yields that $\Delta_H^2 \geq n^{-\alpha+\eps}$, which is enough to show \cref{scond:g2}. \Cref{scond:g4} also holds for $H$ as $N_{H,S} = 0$ for $\abs{S} \geq 2$.

If $H' \neq H$, then either $H' \not\succ H$, in which case $N_{H,S} = 0$ for all $\abs{S} \geq 1$ and \cref{scond:g3,scond:g4} are trivial, or \cref{cond:h2} applies. In this case, by our above discussion we have that $\sum_{\abs{S} \geq 1} N_{H,S} \lsim_\ell n^{v(H') - v(H) + \alpha}$, so
\[\sum_{\abs{S} \geq 1} \Delta_H^2 N_{H,S} \lsim_\ell n^{-\eps}.\]
This implies \cref{scond:g3,scond:g4}.

Finally, to show \cref{scond:g5}, we claim that $Q$ simply consists of quadruples of four equal standard copies of $H$. Indeed, \cref{scond:g5a,scond:g5b,scond:g5c} imply that any $(H_1,H_2,H_3,H_4) \in Q$ must consist of four super-rainbow copies of $H$ and thus must be standard copies, and \cref{scond:g5b,scond:g5c} imply that $H_1 = H_2$ and $H_3 = H_4$. \Cref{scond:g5d} ensures that $H_1 = H_2 = H_3 = H_4$. Therefore,
\[\sum_{(H_1,H_2,H_3,H_4) \in Q} \Delta_{H_1}\Delta_{H_2}\Delta_{H_3}\Delta_{H_4} \leq n^{\alpha}\Delta_H^4,\]
while
\[\paren*{\sum_{e \in A} \sum_{H \in \calh'}  \Delta^2_H N_{H, e}}^2 \gsim_{\ell,\eps} n^{2\alpha}\Delta_H^4,\]
which is enough to show \cref{scond:g5}.
\end{proof}

\subsection{Proof of \texorpdfstring{\cref{lem:decouple}}{Lemma \ref{lem:decouple}}} \label{subsec:decproof}
Let $\calh_r$ be the set of $r$-vertex graphs in $\calh$ and let $\calh'$ be the set of (possibly disconnected) NIV graphs $H'$ such that $H' \preceq H$ for some $H \in \calh$. By \cref{cond:ifs3} of \cref{def:ifs}, we can write $F_H = \sum_{H' \in \calh'} c_{H,H'} \gamma_{H'}$ where $c_{H,H} = (p(1-p))^{e(H)/2}$ and $\abs{c_{H,H'}} \leq \eta n^{(v(H)-v(H'))/2}$; in particular, we have $\sum_{H \in \calh} t_H F_H = \sum_{H' \in \calh'} \Delta_{H'} \gamma_{H'}$, where we define $\Delta_{H'} = \sum_{H \in \calh} c_{H,H'} t_H$. Recall that $c_{H,H'} \neq 0$ implies $H\succeq H'$.

For $2 \leq r \leq \ell$, let $L_r = \max_{H\in\calh_r} n^{r/2} \abs{t_H}$. By hypothesis, there is some $r$ with $L_r \geq n^{\eps}$; therefore, we may pick some $m$ such that $L_m \geq n^{\eps/\ell} \max(1, \max_{r > m} L_r)$.
We now split into cases.

Suppose $L^2_m \leq n^{m-\eps}$. From the bounds on $c_{H,H'}$, the contribution to $n^{v(H')/2}\Delta_{H'}$ from $\sum_{H \in \calh_r} t_H F_H$ is $O_{\ell,\eta}(L_r)$. It follows that for $H \in \calh'$ with more than $m$ vertices we have $n^{v(H)} \Delta_H^2 \lsim_{\ell,\eta} n^{-2\eps/\ell}L_m^2$. Moreover, for $m$-vertex $H \in \calh'$, we have $n^m \Delta_H^2 \lsim_{\ell,\eta} L_m^2$ and $n^{m/2}\abs{\Delta_H - \Delta'_H} \lsim_{\ell,\eta} n^{-\eps/\ell} L_m$,
where we define $\Delta'_H = \sum_{H' \in \calh_m} c_{H',H} t_{H'}$.

Now, let $\beta$ be such that $L_m^2 = n^{m\beta + \eps/\ell}$; note that
\[0 \leq \beta \leq 1-\frac{\eps+\eps/\ell}{m} \leq 1-\frac{\eps}{\ell}.\] We then apply \cref{lem:setup1} with $\alpha$ and $\beta$ equal to this choice of $\beta$, $k = m-2$, and with $\eps$ replaced with $\eps/(2\ell)$. \Cref{cond:m3} follows from our previous discussion, whereas to show \cref{cond:m1} we need to verify for some positive $\Omega_{\ell,\eps}(1)$ that
\[m\beta + \eps/\ell + \Omega_{\ell,\eps}(1) \leq 2 + (m-2)\beta - \frac{\eps}{2\ell} \iff 2\beta + \Omega_{\ell,\eps}(1) \leq 2 - \frac{3\eps}{2\ell},\]
which is true. Finally, to show \cref{cond:m2} we need to prove that there exists some $H \in \calh_m$ with $n^{m/2} \abs{\Delta_H} \geq n^{m\beta /2+\eps/(4\ell)} = n^{-\eps/(4\ell)} L_m$. Suppose not; then by our bound on $\abs{\Delta_H - \Delta'_H}$ we must then have $n^{m/2} \abs{\Delta'_H} \lsim_{\ell,\eta} n^{-\eps/(4\ell)} L_m$. Let $M = (c_{H',H})_{H,H' \in \calh_m}$ be a matrix with rows and columns indexed by $\calh_m$; by construction, $(\Delta'_H)_{H \in \calh_m} = M\cdot (t_H)_{H \in \calh_m}$. Observe that under the $\preceq$ order, $M$ is upper triangular, has entries bounded by $\eta$, and has diagonal entries that are $\Theta_{\lambda,\ell}(1)$. As a result, $M\inv$ has entries that are $O_{\lambda,\ell,\eta}(1)$, which implies that $n^{m/2} \abs{t_H} \lsim_{\lambda,\ell,\eta} n^{-\eps/(4\ell)} L_m$ for all $H \in \calh_m$, contradicting the definition of $L_m$.

If $n^{m-\eps} < L_m^2 \leq \pi^2 n^m$, further let $\calh_{m,s}$ be the set of graphs in $\calh$ with $m$ vertices and $s$ edges and define $R_s = \max_{H \in \calh_{m,s}} n^{m/2} \abs{t_H}$. Since $L_m \geq n^{\eps/\ell} \max(1, \max_{r > m} L_r)$, we may pick some $u$ such that $R_u \geq n^{\eps/\ell^3} \max(L_m/n^{\eps/\ell}, \max_{s>u} R_u)$. By similar logic to the first case we find that for all $H \in \calh'$ with greater than $m$ vertices or with $m$ vertices and greater than $u$ edges, we have $n^{v(H)}\Delta_H^2 \lsim_{\ell,\eta} n^{-2\eps/\ell^3} R_u^2$. Moreover, since $H \prec H'$ implies $v(H) < v(H')$ or $e(H) < e(H')$, there exists some $H \in \calh_{m,u}$ with $n^{v(H)} \Delta_H^2 = (1 + O_{\ell,\eta}(n^{-\eps/\ell^3}))(p(1-p))^uR_u^2$. Choose $\alpha$ such that $R_u^2 = n^{m-\alpha+\eps/\ell^3}$, and note that
\[\frac{\eps}{\ell^3} - \frac{\log \pi^2}{\log n} \leq \alpha \leq \eps + \frac{2\eps}{\ell} - \frac{\eps}{\ell^3} \leq 2\eps.\]
The conditions of \cref{lem:setup2} can straightforwardly be shown to hold for this choice of $H$ and $\alpha$ and with $\eps/(2\ell^3)$ replacing $\eps$, as long as $\eps \leq \frac{1}{2}$ and $n$ is sufficiently large, both of which we may assume. This completes the proof.

\section{Application to Proportional Graphs} \label{sec:prop}
Qualitatively speaking, \cref{thm:main} says that given a list of desired connected subgraph counts for an $n$-vertex graph that are close to their expectations in $G(n, p)$, the only obstruction to the existence of a graph with those subgraph counts for large $n$ is integrality. In this section, we apply this idea to enumerate and show the existence of $p$-proportional graphs and related concepts. We begin with the following definition of Janson.
\begin{defn}[{\cite[\pnfmt{64}]{Jan94}}] \label{def:phhat}
Given a nonempty set of graphs $\calh$ and some $0 < p < 1$, a graph $G$ is \vocab{$\suphat{(p,\calh)}$-proportional} if $v(G) \geq \max\set{v(H) \colon H \in \calh}$ and $\gamma_H(G) = 0$ for all $H \in \calh$.
\end{defn}
Letting $\calc_k$ be the set of (isomorphism classes of) connected $k$-vertex graphs, we additionally call $G$ \vocab{$p$-proportional} if $G$ is $\suphat{(p, \calc_2 \cup \calc_3)}$-proportional, \vocab{$p$-superproportional} if $G$ is $\suphat{(p, \calc_2 \cup \calc_3 \cup \calc_4)}$-proportional, and \vocab{$p$-hyperproportional} if $G$ is $\suphat{(p, \calc_3 \cup \calc_4 \cup \calc_5 \cup \set{2K_2})}$-proportional. These definitions are motivated by the fact that they appear as possible degenerate cases in the distribution of the number of induced copies of $G$ in the $G(n,p)$ and $G(n,m)$ models (for more details see \cite[Sec.~10]{Jan94}).
\begin{rmk}
The terms ``$p$-proportional'' and ``$p$-superproportional'' are have appeared previously in the literature \cite{JK91,Jan94}, while the term ``$p$-hyperproportional'' is new. We caution the reader that although $p$-superproportionality is a stronger condition than $p$-proportionality, it is impossible for a graph to be $\suphat{(p, \set{K_2, P_2, 2K_2})}$-proportional (see \cite{Jan95} or the discussion below), implying that $p$-hyperproportionality and $p$-proportionality are in fact disjoint conditions.
\end{rmk}
Since every $p$-proportional graph $G$ must have $e(G) = p\binom{v(G)}{2}$, no $p$-proportional graph can exist with $p$ irrational. Conversely, for any rational $p \in (0,1)$ it was shown by K\"arrman \cite{Kar93} and Janson and Spencer \cite{JS92} that there exist infinitely many $p$-proportional graphs. Janson and Spencer \cite{JS92} additionally showed that $n$-vertex $p$-proportional graphs exist for all sufficiently large $n$ satisfying an integrality condition (the same one that we discuss below). Turning to $p$-superproportional graphs, in 1994 K\"arrman \cite{Kar94} published an example of a $64$-vertex $\frac{1}{2}$-superproportional graph found by computer search, and no other examples have appeared in the literature.

Using \cref{thm:main}, we are able to resolve a conjecture of Janson and Kratochv\'il \cite{JK91} that a local central limit theorem exists for proportional graphs. Specifically, given any downwards collection of NIV connected graphs $\calh$, the probability that $G \sim G(n,p)$ is $\suphat{(p, \calh)}$-proportional is
\begin{equation} \label{eq:countpg}
\frac{\prod_{H \in \calh} (\aut H)^{1/2}}{(2\pi)^{\abs{\calh}/2} n^{\sum_{H \in \calh} v(H)/2} (p(1-p))^{\sum_{H \in \calh} e(H)/2}} \cdot (1 + O_{p,\eps}(n^{-1/2+\eps})),
\end{equation}
provided that the all-zero tuple is permissible. To that end, we make the following definition.
\begin{defn} \label{def:ppc}
A positive integer $n \geq 3$ is \vocab{$p$-proportional-compatible} (\vocab{$p$-PC} for short) if the evaluation on $\calr_{\calc_2 \cup \calc_3, n}$ sending $\gamma_H \mapsto 0$ for all $H \in \calc_2 \cup \calc_3$ is integral. A positive integer $n \geq 4$ is \vocab{$p$-superproportional-compatible} (\vocab{$p$-SPC}) if the evaluation on $\calr_{\calc_2 \cup \calc_3 \cup \calc_4, n}$ sending $\gamma_H \mapsto 0$ for all $H \in \calc_2 \cup \calc_3 \cup \calc_4$ is integral.
\end{defn}

With $p$-hyperproportional graphs, the situation is slightly more complicated. Note that since we always have
\[\chi_e^2 + \frac{2p-1}{\sqrt{p(1-p)}} \chi_e - 1 = 0,\]
we conclude that
\begin{equation} \label{eq:2k2}
\gamma_{K_2}^2 = 2\gamma_{2K_2} + 2\gamma_{P_2} - \frac{2p-1}{\sqrt{p(1-p)}} \gamma_{K_2} + \binom{n}{2}.
\end{equation}
Therefore, $G$ is $p$-hyperproportional if and only if $G$ is $\suphat{(p,\calc_3\cup\calc_4\cup\calc_5)}$-proportional and
\begin{equation*} 
\gamma_{K_2}^2 + \frac{2p-1}{\sqrt{p(1-p)}} \gamma_{K_2} - \binom{n}{2} = 0 \iff \gamma_{K_2} = \frac{\frac{1}{2}-p}{\sqrt{p(1-p)}} \pm \sqrt{\binom{n}{2} + \frac{(\frac{1}{2}-p)^2}{p(1-p)}}.
\end{equation*}
Thus there are two types of $p$-hyperproportional graphs: those where the above equality holds with a $+$ sign and those where it holds with a $-$ sign. Call such graphs \vocab{$(p,+)$-hyperproportional} and \vocab{$(p,-)$-hyperproportional}, respectively. Furthermore, define \vocab{$(p,+)$-} and \vocab{$(p,-)$-hyperproportional-compatible} (\vocab{$(p,\pm)$-HPC}) positive integers $n \geq 5$ in a way analogous to \cref{def:ppc}. Now, since in either case we have $\abs{\gamma_{K_2}} \lsim_p n \asymp \sigma_{K_2}$, \cref{thm:main} immediately implies the following.
\begin{cor}
For sufficiently large $(p,+)$-HPC (resp.~$(p,-)$-HPC) $n$ depending on $p$, there exists a $(p,+)$- (resp.~$(p,-)$-) hyperproportional graph with $n$ vertices.
\end{cor}
Moreover, it is straightforward to enumerate such graphs in a manner analogous to \labelcref{eq:countpg}.

We conclude this section with results concerning the properties of $p$-PC, $p$-SPC, and $(p, \pm)$-HPC numbers, with their proofs, consisting of elementary number-theoretic computations, deferred to \cref{sec:app}. We begin with full characterizations of $p$-PC and $p$-SPC numbers, for which there exist infinitely many for every rational $p$.
\begin{prop} \label{prop:pc}
Suppose $p = a/b$ where $a$ and $b$ are relatively prime positive integers. Then a positive integer $n \geq 3$ is $p$-PC if and only if
\begin{cond}
\item \label{cond:pc1} If $2 \mid b$, $\nu_2(n) \geq 3\nu_2(b)$ or $\nu_2(n-1) \geq 3\nu_2(b) + 1$;
\item \label{cond:pc2} If $3 \mid b$, $\max\set{\nu_3(n),\nu_3(n-1)} \geq 3\nu_3(b) + 1$;
\item \label{cond:pc3} For every prime $q > 3$, $\max\set{\nu_q(n),\nu_q(n-1)} \geq 3\nu_q(b)$.
\end{cond}
\end{prop}
\begin{prop} \label{prop:spc}
Suppose $p = a/b$ where $a$ and $b$ are relatively prime positive integers. Then a positive integer $n \geq 4$ is $p$-SPC if and only if
\begin{cond}
\item \label{cond:spc1} If $\nu_2(b) = 1$, $\nu_2(n) \geq 6$ or $\nu_2(n-1) \geq 7$;
\item \label{cond:spc2} If $\nu_2(b) = 2$, $\nu_2(n) \geq 13$ or $\nu_2(n-1) \geq 11$;
\item \label{cond:spc3} If $\nu_2(b) \geq 3$, $\nu_2(n) \geq 6\nu_2(b) + 1$ or $\nu_2(n-1) \geq 6\nu_2(b)$;
\item \label{cond:spc4} If $3 \mid b$, $\nu_3(n) \geq 6\nu_3(b)$ or $\nu_3(n-1) \geq 6\nu_3(b) + 1$;
\item \label{cond:spc5} For every prime $q > 3$, $\max\set{\nu_q(n),\nu_q(n-1)} \geq 6\nu_q(b)$.
\end{cond}
\end{prop}
While it would be theoretically possible to write out a similar characterization of $(p, \pm)$-HPC numbers, there are so many cases that we believe that it is not worth stating. Instead of a full characterization, we observe the following special cases.
\begin{prop} \label{prop:hpcgen}
If $n$ is $(p, \pm)$-HPC, then $p$ must be rational. Moreover, given a rational $p \in (0,1)$, there are infinitely many $(p, +)$-HPC numbers if and only if there are infinitely many $(p, -)$-HPC numbers, which occurs precisely when $p \in \set{\frac{1}{3}, \frac{2}{3}}$ or $p(1-p)$ is not $2c^2$ for some rational $c$.
\end{prop}

As one might guess from the square roots involved in the definition of $(p, \pm)$-HPC numbers, $(p, \pm)$-HPC numbers correspond to solutions to generalized Pell equations subject to certain congruence relations, meaning that they are significantly rarer than $p$-PC or $p$-SPC numbers. As an example, we can characterize $(\frac{1}{2}, \pm)$-HPC numbers as follows.
\begin{prop} \label{prop:hpc12}
A positive integer $n$ is $(\frac{1}{2}, +)$-HPC if and only if it is $(\frac{1}{2}, -)$-HPC. Moreover this happens if and only if
\[n = \paren*{\frac{(1+\sqrt{2})^a + (1+\sqrt{2})^{-a}}{2}}^2\]
for some positive integer $a \geq 2$ with $a \equiv 0, \pm 1, \pm 511 \pmod{1024}$. In particular, the smallest $(\frac{1}{2},\pm)$-HPC number is
\begin{align*}
n &= \begin{array}[t]{@{}r@{}}
393269643023291698757257685885597325993848383834865007942605471587 \\
76646090803634139011571761644665911164995315856589457844040190274 \\
86900324895339884998922974107837617595976120658101454799784430552 \\
76491318398420535797250926457828227049436167142838296079634563380 \\
03268437259421557766468489165196802438714427492861326293694236836 \\
16897572759524717640627107177163613602416684747964902340756531202
\end{array} \\
&\approx 3.93 \times 10^{390}.
\end{align*}
\end{prop}
On the other hand, exactly when $p \in \set{\frac{1}{3},\frac{2}{3}}$, the Pell equation becomes degenerate and the number of solutions increases drastically. In particular, we have the following.
\begin{prop} \label{prop:hpc13}
A positive integer $n \geq 5$ is $(\frac{1}{3}, +)$-HPC if and only if it is $(\frac{2}{3}, -)$-HPC, which occurs if and only if $3^9 \mid n$ or $3^{10} \mid n-2$. A positive integer $n \geq 5$ is $(\frac{1}{3}, -)$-HPC if and only if it is $(\frac{2}{3}, +)$-HPC, which occurs if and only if $3^{10} \mid n-1$ or $3^{10} \mid n-2$.
\end{prop}
Although we do not have a rigorous proof, it appears likely that the only HPC integers less than, say, $10^{100}$ are those mentioned above. While working with dense $3^9 = 19683$-vertex graphs is possible with computers, actually finding a $19683$-vertex $\frac{1}{3}$-hyperproportional graph appears to be out of reach barring some algorithmic improvement.

\appendix
\section{On \texorpdfstring{$p$}{p}-PC, \texorpdfstring{$p$}{p}-HPC, and \texorpdfstring{$(p,\pm)$}{(p, \textpm)}-HPC Numbers} \label{sec:app}
We begin by stating two relations in $\calr_n$. First of all, we recall \labelcref{eq:xgamma}:
\begin{equation} \label{eq:xtogammans}
X_{H} = \sum_{H' \subseteq H}  p^{e(H) - e(H')} (p(1-p))^{e(H')/2}   \frac{\aut H'}{\aut H} \gamma_{H'}.
\end{equation}
Moreover, if $H$ is NIV and $m \geq 0$, we have
\begin{equation} \label{eq:addiv}
\gamma_{H \sqcup m\kone} = \binom{n-v(H)}{m} \gamma_H.
\end{equation}
\subsection{Proof of \texorpdfstring{\cref{prop:pc}}{Proposition \ref{prop:pc}}}
Every graph with at most $3$ vertices either has no edges or is the disjoint union of an element of $\calc_2 \cup \calc_3$ and some isolated vertices. Therefore, if $\Phi$ sends $\gamma_H \mapsto 0$ for all $H \in \calc_2 \cup \calc_3$, we find that for all $H \in \calc_2 \cup \calc_3$, all but one term in \labelcref{eq:xtogammans} is zero. Thus
\[\Phi(X_H) = p^{e(H)}\frac{v(H)!}{\aut H} \binom{n}{v(H)}.\]
Since $\frac{v(H)!}{\aut H} \binom{n}{v(H)}$ is an integer, we conclude that $n$ is $p$-PC if and only if $b \mid \binom{n}{2}$, $b^2 \mid 3\binom{n}{3}$, and $b^3 \mid \binom{n}{3}$. Note that the third condition obviously implies the second. Thus $n$ is $p$-PC if and only if
\[\nu_q(n) + \nu_q(n-1) \geq \nu_q(b) + \nu_q(2) \quad\text{and}\quad \nu_q(n) + \nu_q(n-1) + \nu_q(n-2) \geq 3\nu_q(b) + \nu_q(6)\]
for every prime $q$ that divides $b$.

If $q = 2$, the first condition implies that $n \equiv 0,1 \pmod{4}$. If $n \equiv 0 \pmod{4}$, the conditions are satisfied if and only if we additionally have $\nu_2(n) \geq 3\nu_2(b)$. If $n \equiv 1 \pmod{4}$, the conditions are equivalent to $\nu_2(n-1) \geq 3\nu_2(b) + 1$. This yields \cref{cond:pc1}.

If $q \geq 3$, and $n$ is not $0$ or $1$ mod $q$, the first condition yields a contradiction. Otherwise, if $n \equiv r \in \set{0,1}$ modulo $q$, then the conditions are equivalent to $\nu_q(n-r) \geq 3\nu_q(b) + \nu_q(3)$, yielding \cref{cond:pc2,cond:pc3}.

\subsection{Proof of \texorpdfstring{\cref{prop:spc}}{Proposition \ref{prop:spc}}}
Let $\Phi$ be the evaluation of $\calr_{\calc_2 \cup \calc_3 \cup \calc_4,n}$ sending $\gamma_H \mapsto 0$ for all $H \in \calc_2 \cup \calc_3 \cup \calc_4$. By \labelcref{eq:addiv}, we find that for all $H$ with at most $4$ vertices, we have $\Phi(\gamma_H) = 0$, unless $H$ is edgeless or $H = 2K_2$. In the latter case, by \labelcref{eq:2k2} we find that
\[\Phi(\gamma_{2K_2}) = -\frac{1}{2} \binom{n}{2}.\]
Therefore, by \labelcref{eq:xtogammans}, we find that for all $H \in \calc_2 \cup \calc_3 \cup \calc_4$,
\[\Phi(X_H) = p^{e(H)} \frac{v(H)!}{\aut H} \binom{n}{v(H)} - p^{e(H)-1}(1-p) \frac{4X_{2K_2}(H)}{\aut H} \binom{n}{2}.\]
Therefore, $n$ is $p$-SPC if and only if the following nine quantities are integers:
\begin{gather*}
\begin{aligned}
\Phi(X_{K_2}) &= p\binom{n}{2} &
\Phi(X_{P_2}) &= 3p^2 \binom{n}{3} &
\Phi(X_{K_3}) &= p^3 \binom{n}{3} &
\Phi(X_{K_{1,3}}) &= 4p^3\binom{n}{4}
\end{aligned} \\
\begin{aligned}
\Phi(X_{P_3}) &= 12p^3 \binom{n}{4} - 2p^2(1-p) \binom{n}{2} &
\Phi(X_{\overline{P_2 \sqcup \kone}}) &= 12p^4\binom{n}{4} - 2p^3(1-p) \binom{n}{2} \\
\Phi(X_{C_4}) &= 3p^4\binom{n}{4} - p^3(1-p) \binom{n}{2} &
\Phi(X_{\overline{K_2\sqcup 2\kone}}) &= 6p^5 \binom{n}{4} - 2p^4(1-p)\binom{n}{2} \\
\Phi(X_{K_4}) &= p^6 \binom{n}{4} - \frac{1}{2}p^5(1-p) \binom{n}{2} &&
\end{aligned}
\end{gather*}

Suppose $n$ is $p$-SPC and consider the quantity $N = 2p^5\binom{n}{4}-p^5(1-p)\binom{n}{2}$. If $n$ is $p$-SPC, $N$ has to be an integer as well, since the only way for it not to be an integer is that if it has factors of $b$ in its denominator, and if that is the case then $pN = 2\Phi(X_{K_4})$ cannot be an integer. Therefore,
\[\Phi(X_{\overline{K_2\sqcup 2\kone}}) - 3N = p^4(1-p) \binom{n}{2}\]
has to be an integer as well, implying that $b^5 \mid \binom{n}{2}$.

Conversely, if $b^5 \mid \binom{n}{2}$, it follows that
\[b^5 \mid 3\binom{n}{3} = (n-2)\binom{n}{2}\quad\text{and}\quad b^5 \mid 6\binom{n}{4} = \binom{n}{2} \binom{n-2}{2}.\] It follows that that $b^4 \mid \binom{n}{3}, \binom{n}{4}$, implying for all $H \in \calc_2 \cup \calc_3 \cup \calc_4$ with $e(H) \leq 4$, $\Phi(X_H)$ is an integer. Moreover,
\[\Phi(X_{\overline{K_2\sqcup 2\kone}}) = 6p^5 \binom{n}{4} - 2p^4(1-p)\binom{n}{2} = p^5 \binom{n}{2} \binom{n-2}{2} - 2p^4(1-p)\binom{n}{2}\]
is an integer as well. As a result, we find that $n$ is $p$-SPC if and only if $b^5 \mid \binom{n}{2}$ and
\[\Phi(X_{K_4}) \in \setz \iff 2b^6 \mid a^5 \paren*{2a\binom{n}{4} - (b-a) \binom{n}{2}}.\]
We now take $q$-adic evaluations, where $q$ ranges across the prime divisors of $2b$.

First, if $q \geq 3$, then $\nu_q(b^5) \leq \nu_q(\binom{n}{2})$ is equivalent to $\max \set{\nu_q(n), \nu_q(n-1)} \geq 5\nu_q(b)$. Moreover, since $b^6 \mid b\binom{n}{2}$, the $q$-adic portion of the second condition is equivalent to
\[6\nu_q(b) \leq \nu_q \paren*{a^5\paren*{2a\binom{n}{4}+a\binom{n}{2}}} = \nu_q \paren*{2\binom{n}{4} + \binom{n}{2}}.\]
Now, we may compute
\[2\binom{n}{4} + \binom{n}{2} = \binom{n}{2} \paren*{\frac{(n-2)(n-3) + 6}{6}}.\]
If $n \equiv 0,1 \pmod{q^{5\nu_q(b)}}$, we also have $(n-2)(n-3) + 6 \equiv 12, 8 \pmod{q^{5\nu_q(b)}}$ (respectively). Thus $\nu_q((n-2)(n-3) + 6)$ is $0$ unless $q = 3$ and $n \equiv 0 \pmod{q^{5\nu_q(b)}}$, in which case it is $1$, meaning that
\[\nu_q \paren*{2\binom{n}{4} + \binom{n}{2}} = \nu_q\paren*{\binom{n}{2}} + \begin{cases*}
-1 & if $q = 3$ and $n \equiv 1 \pmod{q^{5\nu_q(b)}}$ \\
0 & else
\end{cases*}\]
Thus our final condition is $\max\set{\nu_q(n), \nu_q(n-1)} \geq 6\nu_q(b)$ if $q \geq 5$, and $\nu_q(n) \geq 6\nu_q(b)$ or $\nu_q(n-1) \geq 6\nu_q(b) + 1$ if $q = 3$, matching \cref{cond:spc4,cond:spc5}.

We now turn to the case $q = 2$. First of all, if $b$ is odd, then the conditions are equivalent to
\[a^5\paren*{2a\binom{n}{4}-(b-a)\binom{n}{2}}\]
being even. The first term is obviously even, while the second is as well since $a(b-a)$ must be even. Henceforth assume that $b$ is even. Since $a$ is odd, we find that assuming $5\nu_2(b) \leq \nu_2(\binom{n}{2})$,
\[\nu_2(2b^6) \leq \nu_2\paren*{b(a/3+1)\binom{n}{2}},\]
meaning that we can replace the second condition with
\[6\nu_2(b) + 1 \leq \nu_2\paren*{a^5 \paren*{2a\binom{n}{4} - (-ab/3-a) \binom{n}{2}}} = \nu_2\paren*{2\binom{n}{4} + (b/3+1) \binom{n}{2}}.\]
Now, since
\[2\binom{n}{4} + (b/3+1) \binom{n}{2} = \binom{n}{2} \paren*{\frac{(n-2)(n-3)+6+2b}{6}},\]
we have rewritten our conditions as
\[5\nu_2(b) \leq \nu_2\paren*{\binom{n}{2}}\quad\text{ and }\quad 6\nu_2(b) + 2 \leq \nu_2\paren*{\binom{n}{2}} + \nu_2((n-2)(n-3)+6+2b).\]

Now assume that $n$ is even. Then $5\nu_2(b) + 1 \leq \nu_2(n)$, meaning that modulo $2^{5\nu_2(b)+1}$, we have $(n-2)(n-3)+6+2b \equiv 2b+12$, so $(n-2)(n-3)+6+2b$ has $2$-adic evaluation at least $3$ if $\nu_2(b) = 1$ and exactly $2$ otherwise. Thus the conditions are satisfied exactly when $\nu_2(n) \geq 6$ if $\nu_2(b) = 1$ and $\nu_2(n) \geq 6\nu_2(b)+1$ otherwise, matching \cref{cond:spc1,cond:spc2,cond:spc3}.

If $n$ is odd, then $5\nu_2(b) + 1 \leq \nu_2(n-1)$, so modulo $2^{5\nu_2(b)+1}$, we have $(n-2)(n-3)+6+2b \equiv 2b+8$, meaning that the $2$-adic evaluation of $(n-2)(n-3)+6+2b$ is $2$ if $\nu_2(b) = 1$, at least $4$ if $\nu_2(b) = 2$, and $3$ otherwise. Therefore the conditions are satisfied if and only if $\nu_2(n-1) \geq 7$ for $\nu_2(b) = 1$, $\nu_2(n-1) \geq 11$ for $\nu_2(b) = 2$, and $\nu_2(n-1) \geq 6\nu_2(b)$ otherwise. This also matches \cref{cond:spc1,cond:spc2,cond:spc3}.

\subsection{Generalities surrounding \texorpdfstring{\boldmath $(p,\pm)$}{(p, \textpm)}-HPC numbers}
In this section we develop a number of general results that will be useful in the proofs of \cref{prop:hpcgen,prop:hpc12,prop:hpc13}, including proving part of \cref{prop:hpcgen}.

Consider the evaluation $\Phi_\pm$ sending $\gamma_H \mapsto 0$ for all $H \in \calc_3 \cup \calc_4 \cup \calc_5$ and sending
\[\gamma_{K_2} \mapsto \frac{\frac{1}{2}-p}{\sqrt{p(1-p)}} \pm \sqrt{\binom{n}{2} + \frac{(\frac{1}{2}-p)^2}{p(1-p)}}.\]
First we compute $\Phi_\pm(\gamma_H)$ for all NIV $H$. The NIV graphs with at most $5$ vertices consist of the empty graph, $K_2$, members of $\calc_3 \cup \calc_4 \cup \calc_5$, $2K_2$, $K_2 \sqcup P_2$, and $K_2 \sqcup K_3$. By construction, $\Phi_\pm(\gamma_{2K_2}) = 0$. Also, in a similar manner to \labelcref{eq:2k2} we obtain that in $\calr_n$,
\begin{align*}
\gamma_{K_2} \gamma_{P_2} &= \gamma_{K_2\sqcup P_2} + 2\gamma_{P_3} + 3\gamma_{K_{1,3}} - 2\frac{2p-1}{\sqrt{p(1-p)}}\gamma_{P_2} + 2(n-2)\gamma_{K_2} \\
\gamma_{K_2} \gamma_{K_3} &= \gamma_{K_2\sqcup K_3} + \gamma_{\overline{P_2\sqcup \kone}} - 3\frac{2p-1}{\sqrt{p(1-p)}}\gamma_{K_3} + \gamma_{P_2}.
\end{align*}
It follows that
\[\Phi_\pm(\gamma_{K_2 \sqcup P_2}) = -2(n-2)\Phi_\pm(\gamma_{K_2})\quad\text{and}\quad\Phi_\pm(\gamma_{K_2\sqcup K_3}) = 0.\]
Therefore, from \labelcref{eq:xtogammans} and \labelcref{eq:addiv}, we obtain that for all $H \in \calc_2 \cup \calc_3 \cup \calc_4 \cup \calc_5$, we have
\begin{multline} \label{eq:phiexpansion}
\Phi_\pm(X_H) = p^{e(H)} \frac{v(H)!}{\aut H} \binom{n}{v(H)} + p^{e(H)-1} \sqrt{p(1-p)} \frac{2(v(H)-2)! e(H)}{\aut H} \binom{n-2}{v(H)-2}\Phi_\pm(\gamma_{K_2}) \\ - p^{e(H)-3} (p(1-p))^{3/2} \frac{4 X_{K_2\sqcup P_2}(H)}{\aut H} 2(n-2) \Phi_\pm(\gamma_{K_2}),
\end{multline}
where the last term only occurs for $5$-vertex graphs.

We first need a lemma about these coefficients.
\begin{lem} \label{lem:div}
If $H'$ and $H$ are graphs with $v(H) \geq v(H')$, then
\[\frac{\aut H' \cdot X_{H'}(H) \cdot (v(H)-v(H'))!}{\aut H} \in \setz.\]
In particular, for $H \in \calc_2\cup\calc_3\cup\calc_4\cup\calc_5$,
\[\frac{v(H)!}{\aut H}, \quad \frac{2(v(H)-2)! e(H)}{\aut H},\quad \frac{4X_{K_2\sqcup P_2}(H)}{\aut H}\in \setz\]
(the above statement applied for $H' = \emptyset, K_2, K_2 \sqcup P_2$).
\end{lem}
\begin{proof}
Consider the action of $\Aut H$ on injective graph homomorphisms $H' \to H$. The stabilizer of each homomorphism must permute the vertices not in the image of the homomorphism, meaning that it must have size dividing $(v(H) - v(H'))!$. This implies that the size of each orbit must have size a multiple of $\aut H/(v(H) - v(H'))!$. But there are exactly $\aut H' \cdot X_{H'}(H)$ such homomorphisms, so the result follows.
\end{proof}

We now start proving \cref{prop:hpcgen}.
\begin{claim} \label{claim:irrat}
If there exists an integer $n \geq 5$ that is $(p,+)$- or $(p,-)$-HPC, then $p$ is rational.
\end{claim}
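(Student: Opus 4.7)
The plan is as follows. Assume without loss of generality that $n$ is $(p,+)$-HPC; the $(p,-)$ case is analogous. I will use the integrality of $z_H := \Phi_+(X_H) \in \setz$ for $H \in \calh = \calc_3 \cup \calc_4 \cup \calc_5 \cup \{2K_2\}$ to extract polynomial equations in $p$ with integer coefficients that jointly force $p$ to be rational. By \cref{eq:phiexpansion}, for every connected $H$ on at most five vertices,
\[z_H = A_H(p) + B_H(p)\,\alpha, \qquad \alpha = \sqrt{\binom{n}{2}p(1-p) + (\tfrac{1}{2}-p)^2},\]
where $A_H$ and $B_H$ are explicit polynomials in $p$ whose coefficients are polynomials in $n$. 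The strategy is to specialize to a few well-chosen $H$, pairwise eliminate $\alpha$, and extract enough polynomial constraints on $p$ to force rationality.

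I specialize to $H \in \{P_2, K_3, C_4\} \subseteq \calc_3 \cup \calc_4 \subseteq \calh$, each of which satisfies $X_{K_2 \sqcup P_2}(H) = 0$ so that the third term of \cref{eq:phiexpansion} drops out. Solving the $z_{P_2}$ equation for $\alpha$ (dividing by $2p(n-2) \neq 0$, valid since $p \in (0,1)$ and $n \geq 5$) and substituting into the $z_{K_3}$ and $z_{C_4}$ equations eliminates $\alpha$ entirely, yielding
\[\binom{n}{3}p^3 - z_{P_2}p + 2z_{K_3} = 0 \quad\text{and}\quad 6\binom{n}{4}p^4 - (n-3)z_{P_2}p^2 + 2z_{C_4} = 0.\]
Using the first to substitute for $p^4$ in the second reduces the latter to the quadratic
\[(n-3)z_{P_2}p^2 - 6(n-3)z_{K_3}p + 4z_{C_4} = 0.\]

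In the generic case $z_{P_2}, z_{K_3} \neq 0$, the quadratic is a nontrivial degree-two integer polynomial vanishing at $p$. If $p$ were irrational, it would, up to a rational scalar, equal the minimal polynomial of $p$ and would divide the cubic in $\mathbb{Q}[p]$. Comparing coefficients of the resulting linear quotient yields the constraints $(n-3)z_{P_2}^2 = 12\binom{n}{3}z_{C_4}$ and $z_{P_2}^3 = 27\binom{n}{3}z_{K_3}^2$, which together force the discriminant of the quadratic to vanish and produce the rational double root $p = 3z_{K_3}/z_{P_2}$---a contradiction. The degenerate subcases are handled directly: when $z_{P_2} = 0$, the cubic either collapses to $p = 0$ (impossible) or exhibits $p$ as a rational root of a nontrivial linear relation; when $z_{K_3} = 0$ with $z_{P_2} \neq 0$, rearranging the original identity $z_{K_3} = 0$, isolating $\alpha$, and squaring to eliminate $\alpha$ yields the explicit value $p = 12/(n^2 - n + 6) \in \mathbb{Q}$. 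The main obstacle is the careful bookkeeping through these subcases and verifying the structural cancellation in the $\alpha$-elimination; the underlying algebra is elementary throughout.
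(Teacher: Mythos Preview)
Your argument is correct. The elimination of $g$ to obtain the cubic $\binom{n}{3}p^3 - z_{P_2}p + 2z_{K_3}=0$ and the quadratic $(n-3)z_{P_2}p^2 - 6(n-3)z_{K_3}p + 4z_{C_4}=0$ checks out, the divisibility/discriminant computation in the generic case is accurate, and the degenerate cases are handled properly (in particular the value $p = 12/(n^2-n+6)$ really does fall out of $z_{K_3}=0$).

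That said, the paper's route is considerably shorter. Rather than using $P_2,K_3,C_4$, it uses $K_2,P_2,K_3$---i.e., one graph of each edge count $e=1,2,3$---and observes that for any connected $H$ on at most four vertices the normalized quantity
\[
\xi_{e(H)} \;=\; \frac{\aut H}{v(H)!\binom{n}{v(H)}}\,\Phi_\pm(X_H) \;=\; p^{e(H)}\Bigl(1+e(H)\sqrt{\tfrac{1-p}{p}}\,\tfrac{2\Phi_\pm(\gamma_{K_2})}{n(n-1)}\Bigr)
\]
depends only on $e(H)$ and is rational. Writing $\xi_k = p^k(1+k\theta)$ with $\theta = \sqrt{(1-p)/p}\cdot 2\Phi_\pm(\gamma_{K_2})/(n(n-1))$, one has $\xi_1^2-\xi_2 = p^2\theta^2$ and $\xi_1\xi_2-\xi_3 = 2p^3\theta^2$, so $2p = (\xi_1\xi_2-\xi_3)/(\xi_1^2-\xi_2)$ is a ratio of rationals. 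No case analysis is needed because $\Phi_\pm(\gamma_{K_2})\neq 0$ (it satisfies a quadratic with nonzero constant term $-\binom{n}{2}$), so the denominator never vanishes. The trade-off: the paper exploits $\Phi_\pm(X_{K_2})\in\setz$, which your argument avoids (you use only $H\in\calc_3\cup\calc_4$), but in the HPC context this integrality is available for free, and using it buys a two-line proof in place of the minimal-polynomial casework.
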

\begin{proof}
First, consider some connected $H$ with at most $4$ vertices. Then, we find that
\begin{align*}
\frac{\aut H}{v(H)! \binom{n}{v(H)}} \Phi_\pm(X_H) &= p^{e(H)} + p^{e(H)-1}\sqrt{p(1-p)} \frac{2e(H)}{n(n-1)} \Phi_\pm(\gamma_{K_2})\\
&= p^{e(H)} \paren*{1 + e(H) \sqrt{\frac{1-p}{p}} \frac{2\Phi_\pm(\gamma_{K_2})}{n(n-1)}}
\end{align*}
is a rational number; since it depends only on $e(H)$ call it $\xi_{e(H)}$. Now we compute that
\[\xi_1^2 - \xi_2 = \frac{4p(1-p)}{n^2(n-1)^2}\Phi_\pm(\gamma_{K_2})^2 \quad\text{ and }\quad \xi_1\xi_2 - \xi_3 = \frac{8p^2(1-p)}{n^2(n-1)^2} \Phi_\pm(\gamma_{K_2})^2\]
are both rational, so their quotient, which is equal to $2p$, must be rational as well.
\end{proof}
\begin{rmk}
This argument in fact proves that if there exists a $\suphat{(p, \calc_3)}$-proportional graph, then $p$ is rational.
\end{rmk}

For the remainder of the proof we will consider rational $p$; suppose that $p = a/b$ with $\gcd(a,b) = 1$. Define
\[h_\pm = \sqrt{a(b-a)}\Phi_\pm(\gamma_{K_2}) = \frac{b - 2a \pm \sqrt{D}}{2},\]
where we additionally define
\[D = 2a(b-a)n(n-1) + (b-2a)^2 = \frac{2a(b-a)(2n-1)^2+2(3a-b)(3a-2b)}{4}.\]
Now \labelcref{eq:phiexpansion} becomes
\begin{multline} \label{eq:phiab}
\Phi_\pm(X_H) = \frac{a^{e(H)-2}}{b^{e(H)}} \paren[\bigg]{\frac{v(H)!}{\aut H} a^2 \binom{n}{v(H)} +  \frac{2(v(H)-2)! e(H)}{\aut H} a \binom{n-2}{v(H)-2} h_\pm \\ -  2\frac{4X_{K_2\sqcup P_2}(H)}{\aut H} (b-a)(n-2)h_\pm}.
\end{multline}
\begin{lem} \label{lem:hpcabc}
A necessary condition for $n$ to be $(p, \pm)$-HPC is that $\frac{1}{b}(h_\pm+a\binom{n}{2})$ is an integer and $b^8 \mid 2h_\pm(n-2)$. A sufficient condition for $n$ to be $(p, \pm)$-HPC is that $\frac{1}{b}(h_\pm+a\binom{n}{2})$ is an integer and $b^{12} \mid n-2$.
\end{lem}
\begin{proof}
First assume that $n$ is $(p, \pm)$-HPC. Since
\[\Phi_\pm(X_{K_2}) = \frac{a}{b} \binom{n}{2} + \frac{\sqrt{a(b-a)}}{b} \Phi_\pm(\gamma_{K_2}) = \frac{a\binom{n}{2} + h_\pm}{b},\]
the first condition is evident. Now, we consider the graphs $K_5$, $\overline{K_2 \sqcup 3\kone}$, and $-(P_2 + 2\kone)$. After computing
\begin{align*}
\aut K_5 &= 120 & \aut (\overline{K_2 \sqcup 3\kone}) &= 12 & \aut(\overline{P_2 \sqcup 2\kone}) &= 4 \\
e(K_5) &= 10 & e(\overline{K_2 \sqcup 3\kone}) &= 9 & e(\overline{P_2 \sqcup 2\kone}) &= 8 \\
X_{K_2\sqcup P_2}(K_5) &= 30 &  X_{K_2\sqcup P_2}(\overline{K_2 \sqcup 3\kone}) &= 21 & X_{K_2\sqcup P_2}(\overline{P_2 \sqcup 2\kone}) &= 13
\end{align*}
by \labelcref{eq:phiexpansion} we have
\begin{align}
\Phi_\pm(X_{K_5}) &= \frac{a^8}{b^{10}} \paren*{a^2\binom{n}{5} + a\binom{n-2}{3} h_\pm - 2(b-a)(n-2)h_\pm} \label{eq:xk5} \\
\Phi_\pm(X_{\overline{K_2 \sqcup 3\kone}}) &= \frac{a^7}{b^{9}} \paren*{10a^3\binom{n}{5} + 9a\binom{n-2}{3} h_\pm - 14(b-a)(n-2)h_\pm} \label{eq:xk5me} \\
\Phi_\pm(X_{\overline{P_2 \sqcup 2\kone}}) &= \frac{a^6}{b^{8}} \paren*{30a^3\binom{n}{5} + 24a\binom{n-2}{3} h_\pm - 26(b-a)(n-2)h_\pm}. \nonumber
\end{align}
Now we observe that
\[-30b^2\Phi_\pm(X_{K_5}) + 6ab\Phi_\pm(X_{\overline{K_2 \sqcup 3\kone}}) - a^2\Phi_\pm(X_{\overline{P_2 \sqcup 2\kone}}) = \frac{a^8}{b^8} (2(b-a)(n-2)h_\pm),\]
so $b^8 \mid 2(n-2)h_\pm$.

Conversely, assume that $\frac{1}{b}(h_\pm+a\binom{n}{2})$ is an integer and $b^{12} \mid n-2$. Clearly $\Phi_\pm(X_{K_2})$ and $h_\pm$ are integers. Moreover, for $H \in \calc_3 \cup \calc_4 \cup \calc_5$, it is straightforward to show that $b^{10}$ divides $\binom{n}{v(H)}$, $\binom{n-2}{v(H)-2}$, and $n-2$. We are now done by combining \labelcref{eq:phiab} and \cref{lem:div}.
\end{proof}

\subsection{Proof of remainder of \texorpdfstring{\cref{prop:hpcgen}}{Proposition \ref{prop:hpcgen}}}
In light of \cref{claim:irrat}, it suffices to consider rational $p$.

If $p \notin \set{\frac{1}{3},\frac{2}{3}}$ and $2a(b-a)$ is a square, then there are only finitely many $n$ for which $4D$ is a perfect square, since $2(3a-b)(3a-2b) \neq 0$, the number $2a(b-a)(2n-1)^2$ is a square, and there are only finitely many pairs of perfect squares with a given nonzero difference. Thus there are only finitely many $n$ such that $h_\pm$ is an integer, so by \cref{lem:hpcabc} there are only finitely many $(p, \pm)$-HPC $n$.

Now suppose $p$ is rational but not of the above form. We first need a lemma.
\begin{lem}
For every positive integer $m$, there exist infinitely many positive integers $n \equiv 2 \pmod{m}$ such that $D$ is a square.
\end{lem}
\begin{proof}
If $p \in \set{\frac{1}{3}, \frac{2}{3}}$, we may compute that $D = (2n-1)^2$, so the conclusion is obvious. For other $p$, we will use the theory of Pell equations. In particular, we know that since $2a(b-a)$ is not a perfect square, there exist positive integers $r$ and $s$ such that $r^2 - 2a(b-a)s^2 = 1$. We claim that we may additionally choose $r$ and $s$ to be equivalent to $1$ and $0$ mod $2m$. To see this, observe that $r + s\sqrt{2a(b-a)}$ is a unit inside the ring $R = (\setz/2m\setz)[\sqrt{2a(b-a)}]$. Since $R$ is finite, some power of $r + s\sqrt{2a(b-a)}$ must be equal to $1$ in $R$, yielding the desired solution. Now, since
\[(2b)^2 - 2a(b-a) \cdot 3^2 = 2(3a-b)(3a-2b)\]
(a reflection of the fact that $K_2$ would be hyperproportional were it not too small), by multiplying $2b + 3\sqrt{2a(b-a)}$ by arbitrarily large powers of $r + s\sqrt{2a(b-a)}$ we find arbitrarily large positive integers $t$ and $u$, congruent to $2b$ and $3$ mod $2m$, such that
\[t^2 - 2a(b-a) \cdot u^2 = 2(3a-b)(3a-2b).\]
We now claim that $n = \frac{u+1}{2}$ works. Indeed, by construction $u \equiv 2 \pmod{m}$ and $D = t^2/4$. Since $t$ is even, we are done.
\end{proof}

We now claim that if $n \equiv 2 \pmod{2b^{12}}$ and $D$ is a square, $n$ is both $(p,+)$- and $(p,-)$-HPC. First of all, we note that $D \equiv 4a(b-a) + (b-2a)^2 \equiv b^2 \pmod{2b^2}$, so $\sqrt{D}$ is $b$ times an odd integer. Therefore $h_\pm$ is an integer that is $-a$ mod $b$. Moreover, $n(n-1) \equiv 2 \pmod{2b}$, so $\binom{n}{2} \equiv 1 \pmod{b}$. Therfore $\frac{1}{b}(h_\pm + \binom{n}{2})$ is an integer, meaning that we are done by \cref{lem:hpcabc}.

\subsection{Proof of \texorpdfstring{\cref{prop:hpc12}}{Proposition \ref{prop:hpc12}}}
The proof is in two parts: we first reduce to a divisibility condition on $n$, and then solve the Pell equation given these divisiblity conditions. We will also note that $h_\pm = \sqrt{\binom{n}{2}}$.
\begin{claim}
An integer $n \geq 5$ is $(\frac{1}{2},+)$-HPC if and only if it is $(\frac{1}{2},-)$-HPC, which occurs exactly when
\begin{itemize}
\item $h_\pm$ is an integer, and 
\item $2^{13} \mid n$, $2^{21} \mid n-1$, or $2^{11} \mid n-2$.
\end{itemize}
\end{claim}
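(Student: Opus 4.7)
The plan is to specialize the general setup of the preceding subsection to $p = \frac{1}{2}$, giving $a = 1$, $b = 2$, $D = 2n(n-1)$, and $h_\pm = \pm m$ where $m := \sqrt{\binom{n}{2}}$. Integrality of $h_\pm$ is then equivalent to $\binom{n}{2}$ being a perfect square, a condition symmetric in $\pm$; this establishes the first bullet. Assuming $m \in \setz$, the task reduces to characterizing when $\Phi_\pm(X_H) \in \setz$ for every $H \in \calc_2 \cup \calc_3 \cup \calc_4 \cup \calc_5$. By \labelcref{eq:phiab}, each $\Phi_\pm(X_H)$ takes the shape $(A_H \pm B_H m)/2^{e(H)}$ for integers $A_H, B_H$ depending on $H$ and $n$, so we need $2^{e(H)} \mid A_H \pm B_H m$.

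Next, I would split into three subcases according to the 2-adic structure of $n$: Case A with $\nu_2(n) \geq 2$ (giving $\nu_2(m) = (\nu_2(n)-1)/2$), Case B with $n$ odd (giving $\nu_2(m) = (\nu_2(n-1)-1)/2$), and Case C with $\nu_2(n) = 1$ (so $m$ is odd); these are exhaustive and, via the observation that $\nu_2(n) \geq 2$ forces $\nu_2(n-2) = 1$, mutually exclusive once the stated divisibility conditions are imposed. In each subcase I would compute $\nu_2(A_H)$ and $\nu_2(B_H m)$ for each $H$, exploiting the identities
\[
\binom{n-2}{3} - 2(n-2) = \frac{(n-2)\,n\,(n-7)}{6}, \qquad 9\binom{n-2}{3} - 14(n-2) = \frac{(n-2)\bigl(3(n-3)(n-4)-28\bigr)}{2},
\]
which are the decisive factorizations arising in the $K_5$ and $\overline{K_2 \sqcup 3\kone}$ formulas \labelcref{eq:xk5,eq:xk5me}. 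A direct $2$-adic computation then shows that the binding graph is $\overline{K_2 \sqcup 3\kone}$ (and coincidentally $K_4$) in Case A, yielding $\nu_2(n) \geq 13$; $K_5$ in Case B, yielding $\nu_2(n-1) \geq 21$; and $K_5$ in Case C, yielding $\nu_2(n-2) \geq 11$.

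The equivalence of $(\frac{1}{2}, +)$-HPC and $(\frac{1}{2}, -)$-HPC then follows from the key observation that, in every subcase and for every relevant $H$, once the stated divisibility condition holds one has \emph{both} $\nu_2(A_H) \geq e(H)$ and $\nu_2(B_H m) \geq e(H)$. Consequently $A_H \pm B_H m \equiv 0 \pmod{2^{e(H)}}$ regardless of sign, and $\Phi_+(X_H) \in \setz \iff \Phi_-(X_H) \in \setz$. The bounds powering this are $\nu_2(m) \geq 6$ in Case A, $\nu_2(m) \geq 10$ in Case B, and the factor of $n-2$ (with $\nu_2(n-2) \geq 11$) appearing in $B_H$ for the binding graphs in Case C; the explicit computations above show in each case that these suffice.

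The main obstacle is the 2-adic computation for the five-vertex graphs of high edge count, especially $K_5$ ($e=10$), $\overline{K_2 \sqcup 3\kone}$ ($e=9$), and $\overline{P_2 \sqcup 2\kone}$ ($e=8$), each of which requires substituting the graph-specific constants $v(H)$, $e(H)$, $\aut H$, and $X_{K_2\sqcup P_2}(H)$ into \labelcref{eq:phiab} and extracting the precise $\nu_2$ of the resulting combination of $\binom{n}{5}$, $\binom{n-2}{3}h_\pm$, and $(n-2)h_\pm$; this is where the sharpening of \cref{lem:hpcabc} from $2^{12} \mid n-2$ to $2^{11} \mid n-2$ (and the corresponding $2^{13}, 2^{21}$ elsewhere) actually comes from. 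A secondary bookkeeping subtlety is verifying that graphs of lower edge count (such as $K_3$, $P_3$, $C_4$, $K_{1,3}$, $\overline{P_2\sqcup\kone}$) do not impose strictly stronger conditions, which becomes automatic once one observes that their $h_\pm$-free terms carry enough $\nu_2$ relative to $e(H)$ in each subcase.
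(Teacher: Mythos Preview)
Your overall strategy---splitting on the $2$-adic residue of $n$, writing each $\Phi_\pm(X_H)=(A_H\pm B_Hm)/2^{e(H)}$, and computing $\nu_2(A_H)$ and $\nu_2(B_Hm)$ graph by graph---is sound and mirrors the paper's case analysis. The paper, however, streamlines the necessary direction by first invoking \cref{lem:hpcabc}: the linear combination
\[-30b^2\Phi_\pm(X_{K_5}) + 6ab\Phi_\pm(X_{\overline{K_2 \sqcup 3\kone}}) - a^2\Phi_\pm(X_{\overline{P_2 \sqcup 2\kone}}) = \frac{a^8}{b^8}\cdot 2(b-a)(n-2)h_\pm\]
isolates the term $(n-2)h_\pm$, immediately giving $2^7\mid h_\pm(n-2)$. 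This single step yields the final bound $\nu_2(n)\ge 13$ in the $n\equiv 0\pmod 4$ case and a preliminary bound in the other cases, after which one application of the $K_5$ formula sharpens to $\nu_2(n-1)\ge 21$ and $\nu_2(n-2)\ge 11$. Your approach avoids this combination but pays for it with more bookkeeping.

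Two of your specific claims need correction. First, $K_4$ is \emph{not} a binding graph in Case~A: with $\nu_2(n)=k$ odd one gets $\nu_2(\binom{n}{4})=k-2$ and $\nu_2((n-2)m)=(k+1)/2$, so $K_4$ only forces $k\ge 11$, not $13$. The graph $\overline{K_2\sqcup 3\kone}$ does give $k\ge 13$ (since $\nu_2(B_Hm)=3+(k-1)/2$ there), so drop the parenthetical. Second, in Case~B your claim that $K_5$ alone suffices is fragile: when $\nu_2(n-1)=3$ one finds $\nu_2(A_{K_5})=\nu_2(B_{K_5}m)=1$, so cancellation is possible and you cannot read off a contradiction from $K_5$ directly. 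This subcase (and the subcase $n\equiv 3\pmod 4$, which your Case~B lumps in) must be eliminated first using smaller graphs---for instance $K_3$ fails at $\nu_2(n-1)=3$ and $P_2$ fails at $\nu_2(n-1)=1$. Once $\nu_2(n-1)\ge 5$ is established, the $K_5$ argument goes through cleanly because then $\nu_2(A_{K_5})=\nu_2(n-1)-2>\nu_2(B_{K_5}m)=(\nu_2(n-1)-1)/2$ are distinct. The paper's use of \cref{lem:hpcabc} sidesteps exactly these cancellation hazards.
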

\begin{proof}
Throughout this proof, we will frequently use \cref{lem:div} implicitly. Also note that if $h_\pm$ is an integer, then $\nu_2(h_\pm) = \frac{1}{2} (\max\set{\nu_2(n),\nu_2(n-1)}-1)$.

Assume $h_\pm$ is an integer. If $2^{13} \mid n$, then we always have $2^{10} \mid \binom{n}{v(H)}$, so the first term in \labelcref{eq:phiab} is an integer. Also $2^6 \mid h_\pm$, so the second and third terms are integers for all graphs with at most $4$ vertices. For $5$ vertex graphs, we note that $\binom{n-2}{3} \equiv -4 \pmod{2^{12}}$, so $\binom{n-2}{3} h_\pm$ and $2(n-2) h_\pm$ are both divisible by $2^8$. It remains to check that for connected $5$-vertex graphs $H$ with at least $9$ edges,
\[2^{e(H) - 8} \mid -\frac{12 e(H)}{\aut H} + \frac{4 X_{K_2\sqcup P_2}(H)}{\aut H}.\]
If $H$ is $K_5$ minus an edge, then we need to show $2 \mid -9+7$, which is true. If $H = K_5$, then we need to show $4 \mid -1 + 1$, which is also true.

If $2^{21} \mid n-1$, then $2^{10} \mid h_\pm$ and $2^{10} \mid \binom{n}{v(H)}$, so $n$ is indeed $(\frac{1}{2},\pm)$-HPC.

If $2^{11} \mid n-2$, then we first note that since $h_\pm$ is odd, $\Phi_\pm(X_{K_2}) = \frac{1}{2}(\binom{n}{2} + h_\pm)$ is an integer. Now, for $v(H) \geq 3$, we have $2^{10} \mid \binom{n-2}{v(H) - 2}$, so we may ignore the second and third terms. Moreover, it is straightforward to compute that $2^{11} \mid \binom{n}{3}$, $2^{9} \mid \binom{n}{4}$, and $2^{10} \mid \binom{n}{5}$, so the first term is integral as well. This finishes the proof of one direction.

In the other direction, we note that by \cref{lem:hpcabc} we get that $h_\pm$ is an integer and $2^7 \mid h_\pm (n-2)$. We now split into cases depending on $n$ mod $4$.

If $n \equiv 0 \pmod 4$, then we get that $2^6 \mid h_\pm$, so $2^{13} \mid n$, as desired.

If $n \equiv 1 \pmod 4$, then $2^7 \mid h_\pm$ and thus $2^{15} \mid n-1$. This implies that $2^{10} \mid \binom{n}{5}$, so by considering $\Phi_\pm(X_{K_5})$ we get that
\[2^{10} \mid h_\pm \paren*{\binom{n-2}{3} - 2(n-2)}.\]
But $\binom{n-2}{3} - 2(n-2)$ is odd, so $2^{10} \mid h_\pm$ and $2^{21} \mid n-1$.

If $n \equiv 2 \pmod 4$, then $h_\pm$ is odd, immediately implying that $2^7 \mid n-2$. This is enough to imply that
\begin{equation*}
\nu_2 \paren*{\binom{n}{5}} = \nu_2(n-2) - 1, \qquad
\nu_2 \paren*{\binom{n-2}{3}h_\pm} = \nu_2(n-2),  \qquad
\nu_2 \paren*{2(n-2)h_\pm} = \nu_2(n-2) + 1.
\end{equation*}
Therefore $\nu_2(\Phi_\pm(X_{K_5})) = \nu_2(n-2) - 11$, implying that $2^{11} \mid n-2$.

Finally, if $n \equiv 3 \pmod 4$, then $(n-2)h_\pm$ is odd, contradiction. This concludes the proof.
\end{proof}

Now, since we can rewrite $h_\pm = \pm \sqrt{\binom{n}{2}}$ as $(2n-1)^2 - 2(2h_\pm)^2 = 1$, the values of $n \geq 1$ with $h_\pm$ integer correspond to solutions to the Pell equation $r^2 - 2s^2 = 1$ with $r$ odd and positive and $s$ even and nonnegative (which is all nonnegative solutions by mod $4$ reasons). The fundamental solution is $(r,s) = (3,2)$, so in general the nonnegative solutions are given by $(r_a, s_a)$ where
\[r_a + s_a\sqrt{2} = (3 + 2\sqrt{2})^a = (1 + \sqrt{2})^{2a} \iff r_a = \frac{(1 + \sqrt{2})^{2a} + (1 + \sqrt{2})^{-2a}}{2}\]
for $a \geq 0$. This in turn means that
\[n = \frac{r_a+1}{2} = \paren*{\frac{(1 + \sqrt{2})^{a} + (1 + \sqrt{2})^{-a}}{2}}^2.\]
It remains to show that for $n \geq 5$ (equivalent to $a \geq 2$) of this form, we have $2^{13} \mid n$, $2^{21} \mid n-1$, or $2^{11} \mid n-2$ if and only if $a \equiv 0, \pm 1, \pm 511 \pmod{1024}$. In other words, we need to show the following.
\begin{claim}
Let $a \geq 2$ and let $r_a + s_a\sqrt{2} = (3 + 2\sqrt{2})^a$. The following are equivalent:
\begin{cond}
\item \label{cond:p1} $2^{14} \mid r_a + 1$, $2^{22} \mid r_a - 1$, or $2^{12} \mid r_a - 3$,
\item \label{cond:p2} $a \equiv 0, \pm 1, \pm 511 \pmod{1024}$.
\end{cond}
\end{claim}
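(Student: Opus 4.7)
The plan is to carry out elementary $2$-adic computations in $R = \mathbb{Z}_2[\sqrt 2]$, which is a discrete valuation ring with uniformizer $\sqrt 2$; let $v$ denote the associated valuation, normalized so $v(\sqrt 2) = 1$, $v(2) = 2$, and $v(x) = 2 v_2(x)$ for $x \in \mathbb{Z}$. Set $\gamma = 3 + 2\sqrt 2 = (1+\sqrt 2)^2$, so that $r_a = (\gamma^a + \gamma^{-a})/2$. One then directly verifies the algebraic identities
\[r_a - 1 = \frac{(\gamma^a - 1)^2}{2\gamma^a}, \quad r_a + 1 = \frac{(\gamma^a + 1)^2}{2 \gamma^a}, \quad r_a - 3 = \frac{(\gamma^{a-1} - 1)(\gamma^{a+1} - 1)}{2 \gamma^a},\]
using $\gamma + \gamma^{-1} = 6$ for the last one.

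The core lemma computes $v(\gamma^k - 1)$ and $v(\gamma^k + 1)$. Since $\gamma - 1 = 2(1+\sqrt 2)$ and $1 + \sqrt 2$ is a unit, $v(\gamma - 1) = 2$; and $\gamma + 1 = 2\sqrt{2}(1 + \sqrt{2})$ gives $v(\gamma + 1) = 3$. Induct on $k$ using $\gamma^{2^{k+1}} - 1 = (\gamma^{2^k} - 1)(\gamma^{2^k} + 1)$: once $v(\gamma^{2^k} - 1) \geq 5$ we have $\gamma^{2^k} \equiv 1$ modulo $(\sqrt 2)^5$, hence $\gamma^{2^k} + 1 \equiv 2$ and $v(\gamma^{2^k} + 1) = 2$, which yields $v(\gamma^{2^k} - 1) = 2k + 3$ for $k \geq 1$. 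For general $k = 2^j m$ with $m$ odd, the factorization $\gamma^{2^j m} - 1 = (\gamma^{2^j} - 1) \sum_{i=0}^{m-1} \gamma^{2^j i}$ exhibits the second factor as a sum of $m$ terms each congruent to $1$ modulo a high power of $\sqrt{2}$, hence a unit. Assembling these pieces,
\[v(\gamma^k - 1) = \begin{cases} 2 & k \text{ odd}, \\ 2 v_2(k) + 3 & k \text{ even}. \end{cases}\]
An analogous argument shows $v(\gamma^k + 1) \leq 3$ for every $k$, whence the second identity forces $v_2(r_a + 1) \leq 2$ always, making the hypothesis $2^{14} \mid r_a + 1$ vacuously impossible.

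Translating via the identities, $v_2(r_a - 1) = v(\gamma^a - 1) - 1$, so $2^{22} \mid r_a - 1$ is equivalent to $v_2(a) \geq 10$, i.e., $a \equiv 0 \pmod{1024}$. For the $r_a - 3$ computation with $a$ odd, both $a - 1$ and $a + 1$ are even and exactly one has $v_2$-value $1$; if $a \equiv 1 \pmod 4$ then $v(\gamma^{a-1} - 1) = 2 v_2(a - 1) + 3$ and $v(\gamma^{a+1} - 1) = 5$, yielding $v_2(r_a - 3) = v_2(a - 1) + 3$, and the case $a \equiv 3 \pmod 4$ gives $v_2(r_a - 3) = v_2(a + 1) + 3$ symmetrically. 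Hence $2^{12} \mid r_a - 3$ reduces to $a \equiv \pm 1 \pmod{512}$; for $a$ even, both $v(\gamma^{a \pm 1} - 1) = 2$ and $v_2(r_a - 3) = 1$ is too small. The union of $1024 \mid a$ and $a \equiv \pm 1 \pmod{512}$ reduces modulo $1024$ to the residue set $\{0, \pm 1, \pm 511\}$, matching condition (2).

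The main obstacle is purely bookkeeping: converting cleanly between $v$ and $v_2$, tracking units in $R$ versus integer valuations, and handling the parity split for $a$ in the $r_a - 3$ computation without errors. Once the three algebraic identities and the closed-form $v(\gamma^k - 1) = \min(2,\, 2v_2(k)+3 \text{ if } k \text{ even})$ are in hand, the remainder is a direct case analysis with no conceptual difficulty.
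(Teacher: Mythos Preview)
Your proof is correct and actually somewhat cleaner than the paper's. Both arguments work in $\mathbb{Z}_2[\sqrt 2]$, but the paper proceeds via an intermediate condition~(3) on $\rho^a$ modulo $2^{10}$ or $2^{11}$, first showing $(1)\Leftrightarrow(3)$ by ad hoc congruence chasing between $r_a$ and $s_a$, and then $(3)\Leftrightarrow(2)$ by computing the multiplicative order of $\rho$ via the $2$-adic logarithm. Your route bypasses the intermediate step entirely: the three identities
\[
r_a-1=\frac{(\gamma^a-1)^2}{2\gamma^a},\qquad r_a+1=\frac{(\gamma^a+1)^2}{2\gamma^a},\qquad r_a-3=\frac{(\gamma^{a-1}-1)(\gamma^{a+1}-1)}{2\gamma^a}
\]
reduce everything to the single closed form $v(\gamma^k-1)=2$ for $k$ odd and $2v_2(k)+3$ for $k$ even, which you establish by the standard squaring induction. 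This is more elementary (no logarithm) and yields exact formulas for $v_2(r_a-c)$ rather than just the threshold information the paper extracts; the paper's approach, on the other hand, makes the group-theoretic structure (order of $\rho$ in $(R/2^eR)^\times$) more visible and would generalize more readily to other targets than $c\in\{-1,1,3\}$.
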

\begin{proof}
We work inside the field $\setq_2(\sqrt{2})$, which has a valuation $\Abs{-}_2$ and ring of integers $\setz_2[\sqrt{2}]$. Let $\rho = 3 + 2\sqrt{2}$.

\makeatletter\@hyper@itemtrue\def\@currentlabelname{}\makeatother\refstepcounter{condi}\label{cond:p3}
We add a condition (3): that $\rho^a \equiv \rho \pmod{2^{10}}$, $\rho^a \equiv \rho\inv \pmod{2^{10}}$, or $\rho^a \equiv 1 \pmod{2^{11}}$.

We first show that \labelcref{cond:p1} and \labelcref{cond:p3} are equivalent. We first claim that we cannot have $r_a \equiv 5, 7 \pmod{8}$. Indeed, since $\rho^2 = 17 + 12\sqrt{2}$, it follows that the powers of $\rho$ alternate $3 + 2\sqrt{2}$ and $1$ modulo $4\sqrt{2}$. So $2^{14} \mid r_a + 1$ is in fact impossible. Now, if $2^{22} \mid r_a - 1$, then $r_a^2 \equiv 1 \pmod{2^{23}}$, implying that $s_a^2 \equiv 0 \pmod{2^{22}}$ and thus $2^{11} \mid s_a$. Therefore $\rho^a \equiv 1 \pmod{2^{11}}$. Moreover, if $r_a \equiv 3 \pmod{2^{12}}$, then $r_a^2 \equiv 9 \pmod{2^{13}}$, implying that $s_a^2 \equiv 4 \pmod{2^{12}}$, which then implies that $s_a \equiv \pm 2 \pmod{2^{10}}$. Thus $\rho^a \equiv \rho^{\pm 1} \pmod{2^{10}}$. This proves that \labelcref{cond:p1} implies \labelcref{cond:p3}.

The opposite is similar. If $\rho^a \equiv 1 \pmod{2^{11}}$, then $2^{11} \mid s_a$, proving that $2^{23} \mid r_a^2 - 1$ and thus $2^{22} \mid r_a \pm 1$. But it is impossible to have $r_a \equiv 7 \pmod{8}$, so $2^{22} \mid r_a - 1$. If $\rho^a \equiv \rho^{\pm  1}\pmod{2^{10}}$, then $s_a^2 \equiv 4 \pmod{2^{12}}$, so $r_a^2 \equiv 9 \pmod{2^{13}}$ and thus $r_a \equiv \pm 3 \pmod{2^{12}}$. But since $r_a \not\equiv 5 \pmod{8}$, we must have $r_a \equiv 3 \pmod{2^{12}}$.

To show that \labelcref{cond:p2} and \labelcref{cond:p3} are equivalent, we note that $\rho^a \equiv \rho^b \pmod{2^{e}}$ if and only if $\rho^{a-b} \equiv 1 \pmod{2^e}$. So it suffices to show that $\rho$ has order $2^9$ mod $2^{10}$ and order $2^{10}$ mod $2^{11}$. To see this, observe that if $\rho$ has order $2$ mod $4$, so its order must be even. Moreover, since $\Abs{\rho^2-1}_2 = 2^{-5/2} < 2^{-1/(2-1)}$, it follows that for even $a$
\[\Abs{\rho^{a} - 1}_2 = \Abs{\log(\rho^{a})}_2 = \Abs{a/2}_2 \Abs{\log(\rho^2)}_2 = \Abs{a/2}_2 \Abs{\rho^2 - 1}_2 = 2^{-3/2}\Abs{a}_2.\]
The result follows.
\end{proof}
We remark that the above claim can also be proven by computer, since $r_a$ is periodic with respect to any modulus.

\subsection{Proof of \texorpdfstring{\cref{prop:hpc13}}{Proposition \ref{prop:hpc13}}}
We first observe three facts. First, by \labelcref{eq:xk5} and \labelcref{eq:xk5me}, we have
\[10b\Phi_\pm(X_{K_5}) - a\Phi_\pm(X_{\overline{K_2\sqcup 3\kone}}) = \frac{a^8}{b^9} \paren*{a\binom{n-2}{3}h_\pm - 6(b-a)(n-2)h_\pm},\]
so by \cref{lem:hpcabc}, a necessary condition for $n$ to be $(p, \pm)$-HPC is $3^9 \mid h_\pm \binom{n-2}{3}$.

Moreover, one can compute the following:
\begin{align}
n \equiv 0 \pmod{3^9} &\implies \binom{n}{5} \equiv \frac{n}{5},\quad \binom{n-2}{3}n \equiv -4n \pmod{3^{10}} \label{eq:binom0} \\
n \equiv 1 \pmod{3^9} &\implies \binom{n}{5} \equiv -\frac{n-1}{20} ,\quad \binom{n-2}{3}(n-1) \equiv -(n-1) \pmod{3^{10}} \label{eq:binom1} \\
n \equiv 2 \pmod{3^{10}} &\implies \binom{n}{5} \equiv \frac{n-2}{30},\quad \binom{n-2}{3} \equiv \frac{n-2}{3} \pmod{3^{10}} \label{eq:binom2}
\end{align}

Finally, note that when $b = 3$, $D = (2n-1)^2$, meaning that for $n \geq 1$, we have $h_+ = n$ and $h_- = 1-n$ when $p = \frac{1}{3}$ and $h_+ = n-1$ and $h_- = -n$ if $p = \frac{2}{3}$. 

We now split into cases.
\subsubsection{Case 1: $n \equiv 0 \pmod 3$}
We aim to show that $n$ is $(\frac{1}{3}, +)$- and $(\frac{2}{3},-)$-HPC if and only if $3^{9} \mid n$, and that $n$ is never $(\frac{1}{3}, -)$- and $(\frac{2}{3},+)$-HPC. The latter fact follows immediately from the fact that $3^8 \mid h_\pm(n-2)$. Now assume that we are working with the other two choices of $p$ and sign, so that $h_\pm = \pm n$.

For the only if direction, note that the condition $3^9 \mid h_\pm \binom{n-2}{3}$ implies that $3^9 \mid n$. For the if direction, note that we automatically have that $3^8$ divides $\binom{n}{v(H)}$, $3^9$ divides $\binom{n}{5}$, and $3^9$ divides $h_\pm$, so it suffices to check $H = K_5$. Since $3^9$ divides $n$ and $h_\pm$, this reduces to checking that modulo $3^{10}$
\[0 \equiv a^2 \binom{n}{5} \pm a\binom{n-2}{3} n \mp 4an \equiv \binom{n}{5} + \binom{n-2}{3}n - 4n.\]
This follows from \labelcref{eq:binom0}.

\subsubsection{Case 2: $n \equiv 1 \pmod 3$}
As in the previous case, from the fact that $3^8 \mid h_\pm(n-2)$ we find that $n$ is never $(\frac{1}{3},+)$- or $(\frac{2}{3},-)$-HPC. Now assume that we are in the other two cases, so that $h_\pm = \pm(n-1)$.

For the only if direction, note that the condition $3^9 \mid h_\pm\binom{n-2}{3}$ implies that $3^9 \mid n-1$. Now, the integrality of $\Phi_\pm(X_{K_5})$ implies that, modulo $3^{10}$,
\begin{multline*}
0 \equiv a^2 \binom{n}{5} \pm a \binom{n-2}{3} (n-1) \mp 2a(n-1)\equiv \binom{n}{5} - \binom{n-2}{3}(n-1) + 2(n-1) \\ \overset{\labelcref{eq:binom1}}{\equiv} (n-1)\paren*{-\frac{1}{20} + 1 + 2},
\end{multline*}
which implies that $3^{10} \mid n-1$ as $-\frac{1}{20} + 1 + 2 \not\equiv 0 \pmod{3}$. The only if direction follows from the fact that we have $3^9 \mid \binom{n}{v(H)}$, $3^{10} \mid \binom{n}{5}$, and $3^{10} \mid h_\pm$.

\subsubsection{Case 3: $n \equiv 2 \pmod 3$}
We aim to show that $n$ is $(p, \pm)$-hyperproportional if and only if $3^{10} \mid n-2$, for both choices of $p$ and both signs.

For the only if direction, note that since $3 \nmid h_\pm$, we must have $3^9 \mid \binom{n-2}{3}$, so $3^{10} \mid n-2$. For the if direction, observe that $\Phi_\pm(X_{K_2}) = \frac{1}{3}(a\binom{n}{2} + h_\pm)$ is an integer since $\binom{n}{2} \equiv 1 \pmod{3}$ and $h_\pm \equiv -a \pmod{3}$. Moreover, for $v(H) \geq 3$, we have that $3^9$ divides $\binom{n}{v(H)}$ and $\binom{n-2}{v(H)-2}$, so it suffices to check $H = K_5$. To do this, we need to show that
\[3^{10} \mid a^2 \binom{n}{5} + a\binom{n-2}{3} h_\pm,\]
which since $h_\pm \equiv -a\pmod{3}$ is equivalent to $\binom{n}{5} \equiv \binom{n-2}{3} \pmod{3^{10}}$. This follows from \labelcref{eq:binom2}.

\printbibliography
\end{document}